\documentclass[12pt,leqno,draft]{article}
\usepackage{amsfonts}
\linespread{1.6}
\pagestyle{plain}
\usepackage{amsmath, amsthm, amsfonts, amssymb, color}
\usepackage{mathrsfs}
\usepackage{color}
\setlength{\topmargin}{0cm} \setlength{\oddsidemargin}{0cm}
\setlength{\evensidemargin}{0cm} \setlength{\textwidth}{16.5truecm}
\setlength{\textheight}{22truecm}

\newtheorem{thm}{Theorem}[section]

\newtheorem{lem}[thm]{Lemma}

\newtheorem{rem}[thm]{Remark}
\theoremstyle{definition}

\newcommand{\scr}[1]{\mathscr #1}
\definecolor{wco}{rgb}{0.5,0.2,0.3}

\numberwithin{equation}{section} \theoremstyle{remark}

\newcommand{\ua}{\uparrow}

\def\al{\alpha}
\def\be{\beta}
\def\ga{\gamma}
\def\de{\delta}

\def\la{\lambda}

\def\si{\sigma}

\def\til{\tilde}

\def\Ga{\Gamma}

\def\De{\Delta}

\def\Om{\Omega}


\def\Q{\mathbb Q}
\def\R{\mathbb R}
\def\P{\mathbb P}
\def\E{\mathbb E}
\def\H{\mathbb H}
\def\N{\mathbb N}
\def\V{\mathbb V}
\def\M{\mathbb M}

\def\sF{\mathscr F}

\def\sL{\mathscr L}

\def\cF{\mathcal F}

\def\e{\operatorname{e}}

\def\d{\mathrm{d}}

\def\ff{\frac}

\def\ua{\uparrow}
\def\na{\nabla}
\def\pp{\partial}
\def\<{\langle}
\def\>{\rangle}
\def\sq{\sqrt}

\def\1{\mathds{1}}

\def\gap{\text{\rm{gap}}}

\title{{\bf   Estimate of Heat Kernel for Euler-Maruyama Scheme of SDEs Driven by $\alpha$-Stable Noise and Applications} \footnote{Supported in
 part by  NNSFC (11801406).} }
\author{
{\bf  Xing Huang$^{a)}$,  Yongqiang Suo$^{*b)}$, Chenggui Yuan$^{c)}$}\\
\footnotesize{$^{a)}$Center for Applied Mathematics, Tianjin
University, Tianjin 300072, China}\\
\footnotesize{$^{b)}$School of Mathematics, Nanjing University of Aeronautics and Astronautics,
	Nanjing 211106,  China}\\
\footnotesize{$^{c)}$Department of Mathematics, Swansea University,
Bay Campus, Swansea, SA1 8EN, UK}\\
\footnotesize{ \sf{xinghuang@tju.edu.cn},\ \sf{suoyongqiang@nuaa.edu.cn},\ \sf{c.yuan@swansea.ac.uk }}\\
}
\begin{document}
\allowdisplaybreaks
\def\R{\mathbb R}  \def\ff{\frac} \def\ss{\sqrt} \def\B{\mathbf
B}
\def\N{\mathbb N} \def\kk{\kappa} \def\m{{\bf m}}
\def\ee{\varepsilon}\def\ddd{D^*}
\def\dd{\delta} \def\DD{\Delta} \def\vv{\varepsilon} \def\rr{\rho}
\def\<{\langle} \def\>{\rangle} \def\GG{\Gamma} \def\gg{\gamma}
  \def\nn{\nabla} \def\pp{\partial} \def\E{\mathbb E}
\def\d{\text{\rm{d}}} \def\bb{\beta} \def\aa{\alpha} \def\D{\scr D}
  \def\si{\sigma} \def\ess{\text{\rm{ess}}}
\def\beg{\begin} \def\beq{\begin{equation}}  \def\F{\scr F}
\def\Ric{\text{\rm{Ric}}} \def\Hess{\text{\rm{Hess}}}
\def\e{\text{\rm{e}}} \def\ua{\underline a} \def\OO{\Omega}  \def\oo{\omega}
 \def\tt{\tilde} \def\Ric{\text{\rm{Ric}}}
\def\cut{\text{\rm{cut}}} \def\P{\mathbb P} \def\ifn{I_n(f^{\bigotimes n})}
\def\C{\scr C}   \def\G{\scr G}   \def\aaa{\mathbf{r}}     \def\r{r}
\def\gap{\text{\rm{gap}}} \def\prr{\pi_{{\bf m},\varrho}}  \def\r{\mathbf r}
\def\Z{\mathbb Z} \def\vrr{\varrho} \def\ll{\lambda}
\def\L{\scr L}\def\Tt{\tt} \def\TT{\tt}\def\II{\mathbb I}
\def\i{{\rm in}}\def\Sect{{\rm Sect}}  \def\H{\mathbb H}
\def\M{\scr M}\def\Q{\mathbb Q} \def\texto{\text{o}} \def\LL{\Lambda}
\def\Rank{{\rm Rank}} \def\B{\scr B} \def\i{{\rm i}} \def\HR{\hat{\R}^d}
\def\to{\rightarrow}\def\l{\ell}\def\iint{\int}
\def\EE{\scr E}\def\no{\nonumber}
\def\A{\scr A}\def\V{\mathbb V}\def\osc{{\rm osc}}
\def\BB{\scr B}\def\Ent{{\rm Ent}}
\def\U{\scr U}\def\8{\infty} \def\si{\sigma}

\renewcommand{\bar}{\overline}
\renewcommand{\tilde}{\widetilde}
\maketitle

\begin{abstract}
In this paper, the discrete  parametrix method is adopted to investigate the estimation of heat kernel for Euler-Maruyama scheme of SDEs driven by $\alpha$-stable noise, which implies Krylov's estimate and Khasminskii's estimate. As an application, the convergence rate of Euler-Maruyama scheme for a class of multidimensional SDEs with singular drift (in aid of Zvonkin's transformation) is obtained.
\end{abstract} \noindent
 AMS subject Classification:\  60H10, 34K26, 39B72.   \\
\noindent
 Keywords: Zvonkin's transformation, Euler-Maruyama scheme, Heat kernel, Krylov's estimate.
 \vskip 2cm

\section{Introduction}
We consider the following $\R^d$-valued stochastic differential equation (SDE for short)
\begin{align}\label{general model}
X_t=x+\int_0^tb(X_{s})\d s+\int_0^tf(X_{s-})\d L_s,
\end{align}
where $b:\R^d\rightarrow\R^d, f:\R^d\rightarrow\R^{d\times d}$ are measurable functions, and $(L_t)_{t\ge0}$ is a general $\R^d$-valued L\'evy process defined on a complete filtration probability space $(\Om,\sF,(\sF_t)_{t\ge0},\P)$.

If coefficients $b$ and $f$ are Lipschitz continuous, the existence and uniqueness of strong solution to \eqref{general model} is established by the Picard iteration. Moreover, SDE \eqref{general model} can also be numerically solved with the Euler-Maruyama (EM for short) scheme, see \cite{KP} and references therein.

  When the coefficients $b$ and $f$ are irregular, there is a great interest in investigating pathwise uniqueness for SDE \eqref{general model} in the past decades. A useful method in this direction is Zvonkin's transformation which was introduced in \cite{AZ}. This method has been applied to various SDEs, see e.g. \cite{FGP,GM,HW181,KR,XZ,Z,Z1,Z2} and references therein. Furthermore, Zvonkin's transformation has also been applied to investigate the convergence rate of EM scheme for SDEs, we refer to \cite{BHY,DKS,GLN,HK,LS,LS2,LS3} for more details. There are also other transformation methods applied to EM scheme, see \cite{KS,LS,LS2,LS3}.

In the continuous case, $L_s=as+\si W_s$, where $(W_s)_{\{s\ge0\}}$ is a Brownin motion. \cite{LS} establishes the existence and uniqueness result and numerical scheme for \eqref{general model} in the one-dimensional case, the drift therein is piecewise Lipschitz. Their proof is based on a transformation, which  globally transform the piecewise Lipschitz drifts into Lipschitz ones. \cite{LS2,LS3} present a transformation for the multidimensional case which allows to prove an existence and uniqueness result for $d$-dimensional SDEs with discontinuous drift and degenerate diffusion coefficients. Compared with the Zvonkin's transformation, the transformation in \cite{LS2,LS3} does not have to solve a system of parabolic partial differential equations in each step. Recently, the strong convergence rate of EM scheme for SDEs  with integrable drift is obtained by the first author with his co-authors in \cite{BHZ},  and the proof is based on ``parametrix method", which was introduced to obtain existence and estimate on the fundamental solutions of PDEs, see \cite{KM,LM}.

For a general L\'evy process, \cite{HK1} shows that the EM scheme converges strongly with convergence rate $\ff{1}{2}$ for \eqref{general model} with additive noise and  a one-sided Lipschitz continuous drift $b$. \cite{PT} establishes the convergence rate for SDEs with H\"older continuous coefficients driven by Brownian motion and by truncated $\alpha$-stable  processes with index $\alpha>1$.
Moreover, \cite{KS} studies the strong convergence of the EM scheme for a large class of SDEs driven by L\'evy processes such as isotropic $\alpha$-stable, relative stable, layered stable  processes, whose proofs rely on the so-called It\^o-Tanaka trick which relates to the time average $\int_0^tb(X_s)\d s$. Under the assumption that the drift is H\"older continuous, the EM scheme for stochastic functional differential equations with $\alpha$-stable noise is shown in \cite{HL}. It is worth noting that the drifts of SDEs in these literature are assumed to be H\"older(-Dini) continuous or piecewise Lipschitz continuous.
 However, essential difficulty comes up when the drifts only belong to some Sobolev space. More precisely, it is not easy to obtain the estimate like $$\E\left|\int_0^T b(X^{(\delta)}_t)-b(X^{(\delta)}_{t_\delta})\d t\right|^q\leq \Phi(\delta)$$
for some function $\Phi:[0,\infty)\to[0,\infty)$ with $\lim_{\delta\to 0}\Phi(\delta)=0$, here, $X^{(\delta)}_t$ stands for the solution to the numerical SDEs and $b$ is the singular drift, $t_\dd:=\lfloor t/\dd\rfloor\dd$, and $\lfloor t/\dd\rfloor$
denotes the integer part of $t/\dd$.

In this work, we consider the case where 
$(L_t)_{t\ge0}$ is a $d$-dimensional Brownian motion  subordinated by a subordinator, i.e. 
\beq\label{1.1}
\d X_{t}=b(X_t)\d
t+\d W_{S_t}, ~~~t\ge0,\ \ X_{0}=x.
\end{equation}
Herein, $b:\R^d\to\R^d$, $(W_{S_t})_{t\ge0}$ is a rotationally invariant $d$-dimensional $\al$-stable process, with the L\'evy measure $\nu(\d z)=\frac{c_\alpha}{|z|^{d+\alpha}}\d z$ for some constant $c_\alpha>0$. Note that $W_{S_t}$ is composed of processes $W_t$ and $S_t$, where
$(W_t)_{t\ge0}$ is a $d$-dimensional Brownian motion on some complete
filtration probability space $(\OO,\F,(\F_t)_{t\ge0},\P)$. $S_t$ is an $\alpha/2$-stable (with $\alpha\in(1,2)$) subordinator independent of $W$. More precisely, $S_t$ is a non-negative and increasing one-dimensional L\'evy process with Laplace transformation $\E \e^{-\gamma S_t}=\e^{-t\gamma^\frac{\alpha}{2}}, \ \ \gamma,t\geq 0$. By the scaling property,  the process $S_t$ has the same law as  $t^{\frac{2}{\alpha}}S_1$.

 Under  assumptions {\bf (A1)-(A2)}  below, we will adopt discrete parametrix method used in \cite{KM} to obtain explicit upper bounds of heat kernel of  discrete-time EM scheme of \eqref{1.1}. As an application, we investigate the strong convergence rate of the EM scheme.

To end this section, we outline the structure of
 the remaining contents as follows: In Section 2 we state our assumptions and main results. Section 3 is devoted to the notation and some preliminaries. In Section  4, we investigate the estimate of heat kernels of 
 \eqref{E1}. The convergence rate of \eqref{E1} is discussed in Section 5. 

\section{Assumptions and Main results}

To state our main result, we first introduce some notation and facts of Sobolev space, which can be found in \cite{T1}.

For $(p,\gamma)\in[1,\8]\times[0,2]$
, let $H_p^\gamma:=(I-\De)^{-\ff{\gamma}{2}}(L^p(\R^d))$ be the usual Bessel potential space with the norm
\beg{align*}
\|f\|_{\gamma,p}:=\|(I-\De)^{\ff{\gamma}{2}}f\|_p\asymp\|f\|_p+\|(-\De)^{\ff{\gamma}{2}}f\|_p,
\end{align*}
where $\|\cdot\|_p$ is the usual $L^p$-norm in $\R^d$, and $(I-\De)^{\ff{\gamma}{2}}f$ and $(-\De)^{\ff{\gamma}{2}}f$ are defined by the Fourier transformation
$$(I-\De)^{\ff{\gamma}{2}}f:=\cF^{-1}((1+|\cdot|^2)^{\ff{\gamma}{2}}\cF f),\ \
(-\De)^{\ff{\gamma}{2}}f:=\cF^{-1}(|\cdot|^\gamma\cF f).$$
For $p=\8,\gamma=1$, we define $H_\8^1$ as the space of Lipschitz functions with norm
$$\|f\|_{1,\8}:=\|f\|_\8+\|\na f\|_\8.$$

In the sequel, we introduce the Sobolev embedding and an important  inequality on the norm of elements in $H_p^\gamma$, which will be used to  construct the Zvonkin transformation and obtain the  priori estimate of solution to the associated elliptic equation.

For $p\in[1,\8]$ and $\gamma\in[0,2]$,
\beg{equation}\label{embed}
\left\{
\begin{aligned}
	&H_p^\gamma\subset L^q,\: \: q\in\left[p,\ff{dp}{d-\gamma p}\right], \: &\gamma p<d; \\
	&H_p^\gamma\subset H_\8^{\gamma -\ff{d}{p}}\subset C_b^{\gamma -\ff{d}{p}}, \: &\gamma p>d,
\end{aligned}
\right.
\end{equation}
where $C_b^\be$ is the usual H\"older space.
Moreover, for $\gamma\in[0,1]$ and $p\in(1,\8]$, there exists a constant $c$ such that for all $f\in H_p^\gamma$,
\begin{align}\label{bb}
\|f(\cdot+z)-f(\cdot)\|_{p}\le c( |z|^\gamma\wedge 1)\|f\|_{\gamma,p}.
\end{align}

Throughout the paper, we impose the following assumptions on the drift $b$.
\beg{enumerate}
\item[{\bf (A1)}] $\|b\|_{\infty}:=\sup_{x\in\R^d}|b(x)|<\infty$.
\item[{\bf (A2)}]  There exist constants $\beta\in(1-\frac{\alpha}{2},1)$ and $p>(\frac{2d}{\alpha}\vee 2)$
such that $b\in H_p^{\beta}$.
\end{enumerate}

Under ${\bf (A2)}$, \eqref{1.1} has a unique strong
solution $(X_t)_{t\ge0}$, (see, for instance, \cite[Theorem 2.4]{XZ}).

The EM scheme corresponding to \eqref{1.1} is defined as follows:
for any $\dd\in(0,1), $
\begin{equation}\label{E1}
\d X^{(\dd)}_t= b( X^{(\dd)}_{t_\dd})\d t+\d W_{S_t}, \ \ t\ge0,~~~X^{(\dd)}_0=X_0
\end{equation}
with $t_\dd:=\lfloor t/\dd\rfloor\dd$, where  $\lfloor t/\dd\rfloor$
denotes the integer part of $t/\dd$.  We emphasize that
$(X^{(\dd)}_{k\dd})_{k\ge0}$ is a homogeneous Markov process. For $t> s$ and $x\in\R^d$,
$p^{(\dd)}(s,x;t,\cdot)$ denotes the transition density of $ X^{(\dd)}_t
$ with the starting point $X_s^{(\dd)}=x$.

Let $p_\alpha(t,x)$ 
 be the density of $W_{S_t}$.
Our first main result gives an explicit upper bound of the transition
kernel $p^{(\delta)}$.
\begin{thm}\label{lem0}
 Under ${\bf (A1)}$,  there exists a constant $C>0$ such that
\begin{equation}\label{A03}
p^{(\dd)}(j\dd,x;t,y)\le Cp_\alpha(t-j\delta,{y-x}),~~~x,y\in\R^d,~~t>
j\dd,~~\dd\in(0,1).
\end{equation}
\end{thm}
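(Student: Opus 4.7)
The plan is to adopt the discrete parametrix method of \cite{KM}, using the pure $\alpha$-stable density $p_\alpha$ as the pilot kernel and treating the bounded drift as a perturbation. The constant $C$ will implicitly depend on a time horizon $T$ in addition to $\|b\|_\infty,\alpha,d$.

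First I would record the one-step explicit form. On each interval $(k\delta,(k+1)\delta]$ the drift in \eqref{E1} is frozen at $b(x)$ given $X^{(\delta)}_{k\delta}=x$, so the increment is $b(x)(t-k\delta)+(W_{S_t}-W_{S_{k\delta}})$ and hence
\[
p^{(\delta)}(k\delta,x;t,y)=p_\alpha\bigl(t-k\delta,\,y-x-b(x)(t-k\delta)\bigr),\qquad t\in(k\delta,(k+1)\delta].
\]
Next, using the standard two-sided bound $p_\alpha(s,z)\asymp s(s^{1/\alpha}+|z|)^{-(d+\alpha)}$ together with the fact that for $\alpha\in(1,2)$ and $s\in(0,1]$ one has $s\le s^{1/\alpha}$, I would establish the pointwise stability
\[
p_\alpha(s,z-w)\le K\,p_\alpha(s,z),\qquad |w|\le\|b\|_\infty s,\ s\in(0,T],
\]
for some $K=K(\|b\|_\infty,\alpha,d,T)$. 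This handles one EM step.

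To iterate over the $n=(t-j\delta)/\delta$ steps I would apply the Chapman--Kolmogorov equation for the Markov chain $(X^{(\delta)}_{k\delta})$ together with the convolution property $\int_{\R^d}p_\alpha(s,u-x)\,p_\alpha(r,v-u)\,du=p_\alpha(s+r,v-x)$ of the stable kernel. Because a naive application of the one-step bound would produce the divergent constant $K^n$, I would decompose
\[
p_\alpha(\delta,z-x-b(x)\delta)=p_\alpha(\delta,z-x)+r_\delta(x,z)
\]
and expand the $n$-step transition as a parametrix series indexed by the set of grid indices at which $r_\delta$ is used. The mean-value theorem together with the gradient estimate $|\nabla p_\alpha(s,z)|\le C\,p_\alpha(s,z)/(s^{1/\alpha}+|z|)$ gives
\[
|r_\delta(x,z)|\le C_1\|b\|_\infty\delta\cdot\frac{p_\alpha(\delta,z-x)}{\delta^{1/\alpha}+|z-x|}.
\]
The convolution property collapses each block of consecutive pilot pieces into a single $p_\alpha(t-j\delta,y-x)$ multiplied by a product of per-correction factors. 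Thanks to $\alpha>1$, each such factor is of order $\delta^{1-1/\alpha}$, which is positive and allows the resulting series to sum to a finite, $\delta$-independent constant.

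The principal difficulty lies in this last step: summing the parametrix expansion to a constant that is uniform in the number of steps $n=(t-j\delta)/\delta$. The condition $\alpha>1$ is essential here, as it turns the single-step drift shift $b(x)\delta$ into a perturbation of size $o(\delta^{1/\alpha})$ relative to the characteristic stable scale, producing the positive power of $\delta$ per correction needed to close the Volterra-type series; without it, the expansion would not converge uniformly in $\delta$.
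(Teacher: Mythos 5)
Your overall strategy (discrete parametrix with the stable kernel as pilot, stability of $p_\alpha$ under shifts of size $O(s)$, Chapman--Kolmogorov for the final partial step) is the same as the paper's, but the decisive step --- summing the expansion uniformly in the number of steps $n=(t-j\delta)/\delta$ --- does not close as you have set it up. Your remainder bound $|r_\delta(x,z)|\le C\|b\|_\infty\delta\, p_\alpha(\delta,z-x)/(\delta^{1/\alpha}+|z-x|)$ measures the singularity at the single-step scale $\delta^{1/\alpha}$, so each correction contributes a factor of order $\delta^{1-1/\alpha}$ per occupied grid point. Since a term with $m$ corrections can be placed at $\binom{n}{m}$ positions, your series is bounded only by $\sum_{m}\binom{n}{m}(C\delta^{1-1/\alpha})^m=(1+C\delta^{1-1/\alpha})^{n}\le\exp(CT\delta^{-1/\alpha})$, which blows up as $\delta\to0$. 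The positivity of $1-1/\alpha$ alone does not rescue this accounting.

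What is needed (and what the paper's Lemma 4.1 does) is to let the derivative produced by the mean-value theorem act on the stable kernel over the whole \emph{remaining macroscopic time}, not over a single step: the kernel $H^{(\delta)}(j\delta,x;j'\delta,x')$ is the difference of generators applied to the frozen density from $(j+1)\delta$ to $j'\delta$, and after recombining by the semigroup property one obtains $|H^{(\delta)}|(j\delta,x;j'\delta,x')\le C\|b\|_\infty((j'-j)\delta)^{-1/\alpha}p_\alpha((j'-j)\delta,x'-x)$, with the integrable-in-time singularity $((j'-j)\delta)^{-1/\alpha}$. The discrete time-convolution $\delta\sum_k((k-j)\delta)^{a}((j'-k)\delta)^{-1/\alpha}$ is then a Riemann sum for a Beta function, and the $m$-fold iterate carries the factor $\Gamma(1-\frac{1}{\alpha})^m/\Gamma(1+m(1-\frac{1}{\alpha}))$, which decays factorially and yields a $\delta$-independent (Mittag--Leffler type) sum. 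In your block decomposition this corresponds to first convolving each $r_\delta$ with the pilot block that follows it (equivalently, integrating the gradient by parts onto that block) \emph{before} taking absolute values; without this transfer of the derivative to the macroscopic scale, the expansion does not sum uniformly in $\delta$. The rest of your argument (one-step explicit density, stability under shifts $|w|\le\|b\|_\infty s$, the treatment of the non-grid time $t$) is sound and matches the paper.
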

As an application of Theorem \ref{lem0}, the rate of strong convergence for EM Scheme \eqref{E1} can be obtained as follows.
\beg{thm}\label{th1} Assume {\bf (A1)}-{\bf (A2)}. Then, for
$\eta\in(0,2)$, there exist constants $C_1,C_2>0$ such that for any $\epsilon\in(0,1)$
\begin{equation}\label{W1}
 \E\Big(\sup_{0\le t\le T}|X_t-X_t^{(\dd)}|^\eta\Big)  \le C_1
2^{C_2(1+\|b\|^{\frac{2\alpha p}{\alpha p-2d}}_{\beta,p} )}\left(
 \delta^{\frac{\eta\beta}{\alpha }}1_{\{2\beta<\alpha\}}+ \E(S_1^{\frac{\alpha}{2}\epsilon})\delta^{\ff{\eta}{2}\epsilon}1_{\{2\beta\geq \alpha\}}\right).
\end{equation}
\end{thm}
\begin{rem}\label{rem1}
\begin{enumerate}
    \item [(1)] Under certain balance condition, and that the drift $b$ is a bounded function and is H\"older continuous with respect to space, time variable, respectively, the rate of strong convergence of EM scheme \eqref{E1} (inhomogenous case) is established in \cite[Corollary 2.6]{KS}. Since Theorem \ref{th1} is also available for $p=\8$, a close inspection of the Sobolev embedding \eqref{embed} reveals that the result in \cite[Corollary 2.6]{KS} is only the special case of our setting for $p=\8$. Moreover, the optimal balance condition $\beta>\ff{2}{\alpha}-1$ for $\alpha\in(1,2)$ in \cite{KS} is stronger than $\be\in(1-\ff{\alpha}{2},1)$ in {\bf (A2)}.
    \item [(2)] Compared with the result in \cite[Corollary 2.6]{KS}, the moment index $\eta$ in Theorem \ref{th1} is allowed to be greater than $\alpha$, which is reasonable since $X_t-X_t^{(\dd)}$ is a bounded process.
    \end{enumerate}
\end{rem}

\section{Notation and Preliminaries}
Through the paper we use the following notation:

The letter $C,c$ with or without subscripts will denote positive constants, whose value may change in different places. We write $f(x)\asymp g(x)$ to mean that there exist positive constants $C_1,C_2$ such that $C_1 g(x)\le f(x)\le C_2 g(x)$, $f(x)\vee g(x)=\max\{f(x), g(x)\}$, and $f(x)\wedge g(x)=\min\{f(x), g(x)\}$.

Let $\eta_t$ be the density of $S_t$ for $t>0$. It follows from  \cite[Lemma 2.1]{BG} that the density of $W_{S_t}$ has the following expression
\begin{align}\label{de}
p_\alpha(t,x)=\int_0^\infty(2\pi s)^{-\frac{d}{2}}\e^{-\frac{|x|^2}{2s}}\eta_t(s)\d s\asymp t(t^{1/\alpha}+|x|)^{-d-\alpha}, ~(t,x)\in(0,\infty)\times\R^d.
\end{align}
The following inequality will be used frequently:
\beg{align}\label{ineq}
(t^{1/\al}+|x+z|)^{-\ga}\le 4^\ga(t^{1/\al}+|x|)^{-\ga}, ~~\ga\ge 0, ~~ |z|\le (2t^{1/\al})\vee(|x|/2).
\end{align}
Moreover, according to \cite[Lemma 2.2]{CZ}, it is clear  that
\begin{align}\label{na}|\nabla^kp_\alpha(t,x)|\leq C t(t^{1/\alpha}+|x|)^{-d-\alpha-k}\leq Ct^{-k/\alpha}p_\alpha(t,x),\ \ k\in\mathbb{N},
\end{align}
where $\na^k$ stands for the $k$th-order gradient with respect to the spatial variable $x$.

According to the Markov property of $W_{S_t}$, we have
\beg{align}\label{mp}
\int_{\R^d}p_{\alpha}(t-r, x'-y)p_{\alpha}(r-s, y-x)\d y=p_{\alpha}(t-s,x'-x).
\end{align}
In addition, for any $p\geq 1$, \eqref{de} implies that there exists a constant $C>0$ such that
\begin{equation}\label{naao}\begin{split}
\|p_\alpha(t,\cdot)\|_{p}&\leq \left(\int_{\R^d}C t^p(t^{1/\alpha}+|x|)^{-pd-p\alpha}\d x\right)^{\frac{1}{p}}\\
&=\left(\int_{\R^d}C t^pt^{-pd/\alpha-p\alpha/\alpha}(1+|x|/t^{1/\alpha})^{-pd-p\alpha}\d x\right)^{\frac{1}{p}}\\
&=\left(\int_{\R^d}C t^pt^{-pd/\alpha-p\alpha/\alpha}t^{d/\alpha}(1+|y|)^{-pd-p\alpha}\d y\right)^{\frac{1}{p}}\leq Ct^{-d/\alpha+d/(\alpha p)}.
\end{split}\end{equation}


Recall that $\eta_t$ is the density of $S_t$. Let
$$\Theta(r)=\frac{\E\left((2\pi S_1)^{-\frac{d}{2}}\e^{\frac{r}{2S_1}}\right)}{\E(2\pi S_1)^{-\frac{d}{2}}}=\frac{\int_0^\infty(2\pi s)^{-\frac{d}{2}}\e^{\frac{r}{2s}}\eta_1(s)\d s}{\int_0^\infty(2\pi s)^{-\frac{d}{2}}\eta_1(s)\d s},\ \ r\geq 0,$$ which is well defined due to
\begin{align*}
&\E\e^{\gamma S_t^{-1}}\leq \exp{\left[\frac{c\gamma}{t^{\frac{2}{\alpha}}}+ \frac{c\gamma^{\frac{\alpha}{2(\alpha-1)}}}{t^{\frac{1}{\alpha-1}}}\right]}<\infty,\ \ \gamma,t>0,
\end{align*}
for some constant $c>0$, see \cite[Proof of Corollary 2.2]{WW}.
It is not difficult to see that the function $\Theta$ is increasing and continuous on $[0,\infty)$ with $\Theta(0)=1$.

Next, we give a lemma which will be used frequently in the sequel sections.
\begin{lem}\label{FKLE} There exists a constant $C>0$ such that for any $x,M\in\R^d$ and $r>0$,
\begin{equation}\label{FK}\begin{split}
p_\alpha(r,x+M)\leq C4^{d+\al}
p_\alpha(r,x)
\Theta(|M|^2r^{-\frac{2}{\alpha}}).
\end{split}
\end{equation}
\end{lem}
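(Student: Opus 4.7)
The plan is to split the estimate according to the size of $|M|$ relative to $r^{1/\alpha}$ and $|x|$, handling small perturbations with \eqref{ineq} and large ones with the exponential growth of $\Theta$. The threshold is chosen so that the hypothesis of \eqref{ineq} is met precisely in the small regime.

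In the small-perturbation regime $|M|\leq (2r^{1/\alpha})\vee(|x|/2)$, I would invoke \eqref{ineq} with $t=r$, $\gamma=d+\alpha$, $z=M$ to obtain $(r^{1/\alpha}+|x+M|)^{-d-\alpha}\leq 4^{d+\alpha}(r^{1/\alpha}+|x|)^{-d-\alpha}$, and then combine this with the two-sided bound \eqref{de} to conclude $p_\alpha(r,x+M)\leq C\,4^{d+\alpha}\,p_\alpha(r,x)$. Since $\Theta\geq 1$ on $[0,\infty)$, the asserted bound follows immediately.

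In the complementary large-perturbation regime $|M|>(2r^{1/\alpha})\vee(|x|/2)$, set $u:=|M|^2 r^{-2/\alpha}>4$, and I would assemble three estimates: (i) from $e^{-|x+M|^2/(2s)}\leq 1$ inside the integral defining $p_\alpha(r,x+M)$ combined with the scaling $\eta_r(s)=r^{-2/\alpha}\eta_1(sr^{-2/\alpha})$ one obtains $p_\alpha(r,x+M)\leq p_\alpha(r,0)\asymp r^{-d/\alpha}$; (ii) since $r^{1/\alpha}<|M|/2$ and $|x|<2|M|$ in this regime, $r^{1/\alpha}+|x|<3|M|$, and \eqref{de} gives $p_\alpha(r,x)\gtrsim r|M|^{-d-\alpha}$; (iii) a polynomial lower bound $\Theta(u)\gtrsim u^{(d+\alpha)/2}$ valid for $u\geq 4$. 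Multiplying (ii) and (iii), $p_\alpha(r,x)\,\Theta(u)\gtrsim r|M|^{-d-\alpha}\cdot|M|^{d+\alpha}r^{-(d+\alpha)/\alpha}=r^{-d/\alpha}$, which absorbs (i) up to a constant that compares favourably with $4^{d+\alpha}$ (using $3<4$).

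The only nontrivial ingredient is the lower bound on $\Theta$. Writing $\Theta(u)=\E_\mu[e^{u/(2S)}]$, where $\mu$ is the probability measure with density proportional to $(2\pi s)^{-d/2}\eta_1(s)$, Jensen's inequality yields $\Theta(u)\geq e^{c_0 u}$ with $c_0=\tfrac{1}{2}\E_\mu[1/S]\in(0,\infty)$; finiteness of this moment comes from the stretched-exponential decay of $\eta_1$ at the origin (standard for stable densities of index $\alpha/2\in(1/2,1)$) and the $s^{-1-\alpha/2}$ tail at infinity, which together imply $\int_0^\infty s^{-d/2-1}\eta_1(s)\,ds<\infty$. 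The polynomial bound on $[4,\infty)$ then follows from the fact that $e^{c_0 u}/u^{(d+\alpha)/2}$ is continuous, strictly positive, and tends to $+\infty$ as $u\to\infty$, hence is bounded away from zero. I expect this $\Theta$-estimate to be the only place needing genuine analysis; the rest is bookkeeping with \eqref{de} and \eqref{ineq}.
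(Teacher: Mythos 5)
Your argument is correct, but it proceeds along a genuinely different route from the paper. The paper's proof never splits into cases: it bounds $\e^{-|x+M|^2/(2s)}\le \e^{-|x|^2/(4s)}\e^{|M|^2/(2s)}$ via $|x+M|^2\ge \tfrac12|x|^2-|M|^2$, then applies the FKG (correlation) inequality to the probability measure proportional to $(2\pi s)^{-d/2}\eta_r(s)\,\d s$ — the first factor being increasing in $s$ and the second decreasing — to factor the subordination integral as $p_\alpha\bigl(r,x/\sqrt2\bigr)\,\Theta(|M|^2r^{-2/\alpha})$, and finally uses \eqref{ineq} with $z=\tfrac{1-\sqrt2}{\sqrt2}x$ to replace $x/\sqrt2$ by $x$ at the cost of $4^{d+\alpha}$. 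That route produces $\Theta$ directly as an upper-bound factor and needs no quantitative information about $\Theta$ beyond its finiteness. Your dichotomy instead uses only $\Theta\ge1$ in the regime $|M|\le(2r^{1/\alpha})\vee(|x|/2)$, where \eqref{ineq} applies verbatim, and in the complementary regime trades the crude bound $p_\alpha(r,x+M)\le p_\alpha(r,0)\asymp r^{-d/\alpha}$ against the lower bounds $p_\alpha(r,x)\gtrsim r|M|^{-d-\alpha}$ and $\Theta(u)\ge \e^{c_0u}\gtrsim u^{(d+\alpha)/2}$; the Jensen step is valid because all negative moments of $S_1$ are finite (the paper records $\E\,\e^{\gamma S_t^{-1}}<\infty$, so $c_0=\tfrac12\E_\mu[1/S]$ is indeed finite and positive). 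The exponents match exactly, $r|M|^{-d-\alpha}\cdot(|M|^2r^{-2/\alpha})^{(d+\alpha)/2}=r^{-d/\alpha}$, so the bound closes. Your approach avoids FKG entirely and is thus more elementary, at the price of being longer and of requiring a genuine (if standard) lower bound on $\Theta$; the paper's proof is shorter and keeps the constant tracking transparent. The remark about the constant ``comparing favourably with $4^{d+\alpha}$'' is unnecessary, since the lemma allows an arbitrary prefactor $C$.
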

\begin{proof} It follows from the  elemental inequality $|a-\bar{a}|^2\geq \frac{1}{2}|a|^2-|\bar{a}|^2, a,\bar{a}\in\R^d$, the FKG inequality and the scaling property of $S_t$ that
\begin{equation}\label{FK00}\begin{split}
p_\alpha(r,x+M)
&=\int_0^\infty(2\pi s)^{-\frac{d}{2}}\e^{-\frac{|x+M|^2}{2s}}\eta_r(s)\d s\\
&\leq \int_0^\infty(2\pi s)^{-\frac{d}{2}}\e^{-\frac{|x |^2}{4s}}\e^{\frac{|M|^2}{2s}}\eta_r(s)\d s\\
&\leq \frac{\int_0^\infty(2\pi s)^{-\frac{d}{2}}\e^{-\frac{|x|^2}{4s}}\eta_r(s)\d s\int_0^\infty(2\pi s)^{-\frac{d}{2}}\e^{\frac{|M|^2}{2s}}\eta_r(s)\d s}{\int_0^\infty(2\pi s)^{-\frac{d}{2}}\eta_r(s)\d s}\\
&\leq p_\alpha\left(r,\frac{x}{\sqrt{2}}\right)\frac{\int_0^\infty(2\pi s)^{-\frac{d}{2}}\e^{\frac{|M|^2}{2s}}\eta_r(s)\d s}{\int_0^\infty(2\pi s)^{-\frac{d}{2}}\eta_r(s)\d s}\\
	&=p_\alpha\left(r,\frac{x}{\sqrt{2}}\right)\frac{\int_0^\infty(2\pi s)^{-\frac{d}{2}}\e^{\frac{|M|^2r^{-\frac{2}{\alpha}}}{2s}}\eta_1(s)\d s}{\int_0^\infty(2\pi s)^{-\frac{d}{2}}\eta_1(s)\d s}\\
&=p_\alpha\left(r,\frac{x}{\sqrt{2}}\right)\Theta(|M|^2r^{-\frac{2}{\alpha}}).
\end{split}
\end{equation}
This together with \eqref{de} and \eqref{ineq} for $z=\ff{1-\sq{2}}{\sq{2}}x$ and $\ga=d+\al$ yields \eqref{FK}.
\end{proof}

\section{Heat Kernel of EM scheme  \eqref{E1}}
In this section, we first express the heat kernel of solution to discrete-time EM scheme in terms of a sum of convolutional terms with iterated kernels $H^{(\de),(k)}$ and the density of the frozen homogeneous scheme defined \eqref{dis-sch} below, and reveal its explicit upper bounds.
Following this result, we then finish the proof of Theorem \ref{lem0}.

For  $x\in\R^d$  and $j\ge0$, let us begin with the ``frozen" homogeneous  scheme $ (\tt
X^{(\de),j,x,x'}_{i\dd})_{i\ge j}$, which is defined by
\begin{equation}\label{dis-sch}
\tt
X^{(\de),j,x,x'}_{(i+1)\dd}=\tt
X^{(\de),j,x,x'}_{i\dd}+b(x')\de+(W_{S_{(i+1)\dd}}-W_{S_{i\dd}}),~~~i\ge
j,~~~\tt
X^{(\de),j,x,x'}_{j\de}=x.
\end{equation}
Note that, the drift $b$ is frozen at $x'$ in the above definition. In what follows, $p^{(\dd)}(j\dd,x;j'\dd,\cdot)$ and
 $\tt p^{(\dd),x'}(j\dd,x;j'\dd,\cdot)$ denote the transition densities between times $j\dd$ and $j'\dd$ of the discretization scheme \eqref{E1} and the above ``frozen" scheme, respectively.

To derive the kernel of the discrete parametrix representation, we introduce  discrete and homogeneous infinitesimal generators as follows:

For $\psi\in C^2(\R^d;\R)$ and $j\ge0$, we define the family of operators $\mathscr{L}_{j\dd}^{ (\dd) }$ and $\hat{\mathscr{L}}_{j\dd}^{ (\dd) }$ by
\begin{align*}
&(\mathscr{L}_{j\dd}^{ (\dd) }\psi)(x):=
\dd^{-1}\left\{\E(\psi(X_{(j+1)\dd}^{(\dd)})|X_{j\dd}^{(\dd)}=x)-\psi(x)
\right\},\\
& (\hat{\mathscr{L}}_{j\dd}^{ (\dd) }\psi)(x):=\dd^{-1}\left\{\E
\psi(\tilde{X}^{(\de),j,x,x'}_{(j+1)\dd}) -\psi(x)\right\},
\end{align*}
and the discrete kernel $H^{(\dd)}$ as:
\begin{equation}\label{Hd}
\begin{split}
H^{(\dd)}(j\dd,x;j'\dd,x'):&=(\mathscr{L}_{j\dd}^{ (\dd)
}-\hat{\mathscr{L}}_{j\dd}^{ (\dd) })\tt
p^{(\dd),x'}((j+1)\dd,\cdot;j'\dd,x')(x),~j'\ge j+1,
\end{split}
\end{equation}
here we use the convention $\til{p}^{(\de),x'}((j+1)\delta,\cdot;(j+1)\dd, x')=\de_{\{x'\}}(\cdot)$, where $\de_{\{x'\}}(\cdot)$ is the Delta function at the point $x'$.
In what follows, let $0\le j<j'\le\lfloor T/\dd\rfloor.$ According
to the parametrix method in \cite[Proposition 4.1]{LM},  the transition density of EM scheme \eqref{E1} and the transition density of the frozen scheme  \eqref{dis-sch} have the following relationships.
\begin{equation}\label{C6}
p^{(\dd)}(j\dd,x;j'\dd,x')=\sum_{k=0}^{j'-j}(\tt
p^{(\dd),x'}\otimes_\dd H^{(\dd),(k)})(j\dd,x;j'\dd,x'),
\end{equation}
where $\tt p^{(\dd),x'}\otimes_\dd H^{(\dd),(0)}=\tt p^{(\dd),x'},$
$\tt p^{(\dd),x'}\otimes_\dd H^{(\dd),(k)} =(\tt p^{(\dd),x'}\otimes_\dd H^{(\dd),(k-1)})\otimes_\dd H^{(\dd)}$ with
$\otimes_\dd$ being the convolution type binary operation defined by
\begin{equation*}
(f\otimes_\dd
g)(j\dd,x;j'\dd,x')=\dd\sum_{k=j}^{j'-1}\int_{\R^d}f(j\dd,x;k\dd,z)g(k\dd,z;j'\dd,x')\d z.
\end{equation*}

The following lemma gives the smoothing properties of the discrete convolution kernel and the estimate of $p^{(\dd)}(j\de,x;j'\de,x')$.
\begin{lem}\label{THA} Assume {\bf(A1)}. Then there exists a constant $\hat{C}_T>0$ independent of $\delta$ such that for any $0\le j<j'\le \lfloor
T/\dd\rfloor$,
\beg{equation}\begin{split}\label{upp-co}
&|(\tt
p^{(\dd),x'}\otimes_\dd H^{(\dd),(m)})|(j\dd,x;j'\dd,x')\\
&\le
\hat{C}_T^m\|b\|_\8^m((j'-j)\de)^{m(1-\ff{1}{\al})}\ff{\Ga(1-\ff{1}{\al})^m}{\Ga(1+m(1-\ff{1}{\al}))}p_\al((j'-j)\de,x'-x),~~~~~ m\ge0,
\end{split}\end{equation}
where $\Ga(x):=\int_0^\8t^{x-1}\e^{-t}\d t$  is the gamma function. Consequently, it holds 
\begin{equation}\label{C1}
p^{(\dd)}(j\dd,x;j'\dd,x')\le \sum_{m=0}^{j'-j}\ff{[\hat{C}_T\|b\|_\8T^{(1-\ff{1}{\al})}\Ga(1-\ff{1}{\al})]^m}{\Ga(1+m(1-\ff{1}{\al}))}p_\al((j'-j)\de,x'-x).
\end{equation}
\end{lem}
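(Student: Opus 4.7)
The strategy is to prove \eqref{upp-co} by induction on $m$ and then sum across the parametrix expansion \eqref{C6} to obtain \eqref{C1}. Two building blocks are required: a comparison between the frozen density and the centred stable kernel $p_\al$, and a pointwise estimate on the discrete kernel $H^{(\dd)}$. At the end, $\hat C_T$ will be chosen large enough to absorb finitely many constants depending only on $T,d,\al,\|b\|_\8$.

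For the base case $m=0$, the frozen scheme \eqref{dis-sch} is simply $W_{S_\cdot}$ translated by the constant drift $b(x')\cdot(\text{elapsed time})$, so
$$\tt p^{(\dd),x'}(j\dd,x;j'\dd,y)=p_\al\bigl((j'-j)\dd,\,y-x-b(x')(j'-j)\dd\bigr).$$
Applying Lemma \ref{FKLE} with shift $M=b(x')(j'-j)\dd$ and time $r=(j'-j)\dd\le T$, and using $|M|^2r^{-2/\al}\le\|b\|_\8^2 T^{2-2/\al}$ together with the monotonicity of $\Theta$, yields $\tt p^{(\dd),x'}\le c_0\,p_\al((j'-j)\dd,\cdot)$, settling \eqref{upp-co} at $m=0$ provided $\hat C_T\ge c_0$.

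The technical heart of the proof is a pointwise estimate on $H^{(\dd)}$. From the definitions of $\mathscr L_{k\dd}^{(\dd)}$ and $\hat{\mathscr L}_{k\dd}^{(\dd)}$ the ``$-\ps(x)$'' terms cancel, and
$$H^{(\dd)}(k\dd,z;j'\dd,x')=\dd^{-1}\int_{\R^d}\bigl[\ps(z+b(z)\dd+w)-\ps(z+b(x')\dd+w)\bigr]p_\al(\dd,w)\d w,$$
with $\ps(\cdot)=\tt p^{(\dd),x'}((k+1)\dd,\cdot;j'\dd,x')$. For $j'\ge k+2$, $\ps$ is smooth and writing the bracketed difference as a line integral in the drift produces a factor $b(z)-b(x')$ bounded by $2\|b\|_\8$. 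The gradient estimate \eqref{na} supplies a singular factor $((j'-k-1)\dd)^{-1/\al}$, the Markov identity \eqref{mp} collapses the $w$-convolution against $p_\al(\dd,\cdot)$ into $p_\al((j'-k)\dd,\cdot)$, and a further application of Lemma \ref{FKLE} absorbs the resulting $O(\|b\|_\8(j'-k)\dd)$ drift-shift into a constant. Using $(j'-k-1)\dd\ge (j'-k)\dd/2$, this yields
$$|H^{(\dd)}(k\dd,z;j'\dd,x')|\le c_1\|b\|_\8\,((j'-k)\dd)^{-1/\al}\,p_\al((j'-k)\dd,x'-z).$$
The boundary case $j'=k+1$, in which $\ps=\de_{\{x'\}}$ by convention, is treated directly: $H^{(\dd)}$ reduces to $\dd^{-1}[p_\al(\dd,x'-z-b(z)\dd)-p_\al(\dd,x'-z-b(x')\dd)]$, and the same mean-value + \eqref{na} + Lemma \ref{FKLE} routine gives the same bound.

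For the induction step, convolving the bound for $m-1$ against $H^{(\dd)}$ produces, by \eqref{mp}, a $z$-integral that collapses to $p_\al((j'-j)\dd,x'-x)$, together with the discrete time-sum
$$\dd\sum_{k=j}^{j'-1}((k-j)\dd)^{(m-1)(1-\ff{1}{\al})}((j'-k)\dd)^{-\ff{1}{\al}},$$
which is comparable (up to an $\al$-dependent universal constant) to the Beta integral
$$\int_0^{(j'-j)\dd}s^{(m-1)(1-\ff{1}{\al})}\bigl((j'-j)\dd-s\bigr)^{-\ff{1}{\al}}\d s=((j'-j)\dd)^{m(1-\ff{1}{\al})}\,\ff{\Ga(1+(m-1)(1-\ff{1}{\al}))\,\Ga(1-\ff{1}{\al})}{\Ga(1+m(1-\ff{1}{\al}))}.$$
The exponent identity $1+(m-1)(1-\ff{1}{\al})-\ff{1}{\al}=m(1-\ff{1}{\al})$ produces the correct power of $(j'-j)\dd$, and the $\Ga$-factor $\Ga(1+(m-1)(1-\ff{1}{\al}))$ cancels the denominator of the inductive prefactor $\Ga(1-\ff{1}{\al})^{m-1}/\Ga(1+(m-1)(1-\ff{1}{\al}))$, leaving $\Ga(1-\ff{1}{\al})^m/\Ga(1+m(1-\ff{1}{\al}))$. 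Choosing $\hat C_T$ to dominate $c_0$, $c_1$ and the Riemann-sum/integral comparison constant yields \eqref{upp-co} at level $m$; \eqref{C1} then follows immediately by summing \eqref{upp-co} over $m$ in \eqref{C6} and bounding $((j'-j)\dd)^{m(1-1/\al)}\le T^{m(1-1/\al)}$. The main obstacle is the kernel bound on $H^{(\dd)}$: one must arrange the gradient-induced singular factor $t^{-1/\al}$ precisely at the integrability threshold of the Beta integral --- where the hypothesis $\al\in(1,2)$ is essential --- and handle the degenerate one-step case $j'=k+1$ separately because $\ps$ collapses to a Dirac mass there.
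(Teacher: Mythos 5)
Your proposal is correct and follows essentially the same route as the paper: the frozen-density comparison via Lemma \ref{FKLE}, the one-step (Dirac) and multi-step bounds on $H^{(\dd)}$ via the mean value theorem, \eqref{na}, \eqref{mp} and Lemma \ref{FKLE}, and the Beta-function telescoping in the induction reproduce the paper's Steps 1 and 2. The only cosmetic difference is that you differentiate $\ps$ at time $(j'-k-1)\dd$ before collapsing the $w$-convolution and then renormalize via $(j'-k-1)\dd\ge (j'-k)\dd/2$, whereas the paper first collapses the convolution with \eqref{mp} and then applies the gradient bound at time $(j'-k)\dd$; both are valid.
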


\begin{proof} We divide the proof into two steps.

{\bf Step 1.} We claim that
\begin{equation}\label{A11}
|H^{(\dd)}|(j\dd,x;j'\dd,x')\le
\hat C_T\|b\|_\8((j'-j)\dd)^{\frac{-1}{\alpha}}p_{\alpha}((j'-j)\dd, x'-x),\ \ j'>j.
\end{equation}
Firstly, we prove \eqref{A11} for $j'=j+1.$ It follows from \eqref{de}, \eqref{na}, \eqref{FK} and  \eqref{Hd} that
\begin{equation*}
\begin{split}
&|H^{(\dd)}|(j\dd,x;(j+1)\dd,x')\\
&=\left|(\mathscr{L}_{j\dd}^{ (\dd)
}-\hat{\mathscr{L}}_{j\dd}^{ (\dd) })\tt
p^{(\dd),x'}((j+1)\dd,\cdot;(j+1)\dd,x')(x)\right|\\
&=\left|\ff{1}{\de}\Big\{\E\left(\dd_{\{x'\}}(X_{(j+1)\de}^{(\de)})|X_{j\de}^{(\de)}=x\right)-
\E\left(\dd_{\{x'\}}(\til{X}_{(j+1)\de}^{(\de),j,x,x'})|\til{X}_{j\de}^{(\de),j,x,x'}=x\right)\Big\}\right|\\
&=\ff{1}{\dd}|p^{(\dd)}-\tt
p^{(\dd),x'}|(j\dd,x;(j+1)\dd,x') \\
&=\ff{1}{\dd}\left|p_\alpha(\delta,x'-x-b(x)\delta)-p_\alpha(\delta,x'-x-b(x')\de)\right|\\
&\leq 2\|b\|_\infty\left|\int_0^1\nabla p_\alpha(\delta,x'-x-b(x')\de+\theta (b(x')-b(x))\delta)\d \theta\right|\\
&\leq C\|b\|_\infty\delta^{-\frac{1}{\alpha}}\sup_{\theta\in[0,1]} p_\alpha(\delta,x'-x-b(x')\de+\theta (b(x')-b(x))\delta)\\
&\leq C\|b\|_\infty\delta^{-\frac{1}{\alpha}} p_\alpha(\delta,{x'-x})\Theta(9\delta^{2-\frac{2}{\alpha}}\|b\|^2_\infty)\\
&=:\hat{C}_T\delta^{-\frac{1}{\alpha}}p_\alpha(\delta,{x'-x}).
\end{split}
\end{equation*}
Thus, \eqref{A11} holds for $j'=j+1$.

Next, we are going to
show that \eqref{A11} holds for $j'>j+1.$
According
to  \eqref{de}, \eqref{na}, \eqref{mp}, \eqref{FK} and \eqref{Hd}, it holds that
\begin{align*}
 &|H^{(\dd)}|(j\dd,x;j'\dd,x')\\
&=\ff{1}{\dd}\Big|\Big\{ \int_{\R^d}p_\al(\de,z)\tt
p^{(\dd),x'}((j+1)\dd,x+b(x)\de+z;j'\dd,x')\d z-\tt
p^{(\dd),x'}((j+1)\dd,x;j'\dd,x')\Big\}\\
&\qquad -\Big\{\int_{\R^d}p_\al(\de,z)\tt
p^{(\dd),x'}((j+1)\dd,x+b(x')\de+z;j'\dd,x')\d z-\tt
p^{(\dd),x'}((j+1)\dd,x;j'\dd,x')\Big\}\Big|\\
&=\ff{1}{\dd}\Big| \int_{\R^d}p_\al(\de,z)\Big\{\tt
p^{(\dd),x'}((j+1)\dd,x+b(x)\de+z;j'\dd,x')\\
&\qquad-\tt
p^{(\dd),x'}((j+1)\dd,x+b(x')\de+z;j'\dd,x')\Big\}\d z\Big|\\
&=\ff{1}{\de}\Big|\int_{\R^d}p_\al(\de,z)\Big\{p_\al\left((j'-(j+1))\de,x'-x-b(x)\de-z-b(x')(j'-(j+1))\delta\right)
\\
&\qquad-p_\al\left((j'-(j+1))\de,x'-x-b(x')\de-z-b(x')(j'-(j+1))\delta\right)\Big\}\d z\Big|\\
&=\ff{1}{\dd} \Big|
p_\alpha((j'-j)\dd,x'-x-b(x)\delta-b(x')(j'-(j+1))\delta)\\
&\qquad-
p_\alpha((j'-j)\delta,x'-x-b(x')\delta-b(x')(j'-(j+1))\delta)\Big|\\
&\leq 2\|b\|_\infty\sup_{\theta\in[0,1]}\left|\nabla p_\alpha
((j'-j)\delta,x'-x-b(x')\delta-b(x')(j'-(j+1))\delta+\theta (b(x')-b(x))\delta)\right|\\
&\leq C\|b\|_\infty((j'-j)\delta)^{-\frac{1}{\alpha}}\sup_{\theta\in[0,1]} p_\alpha((j'-j)\delta,x'-x-b(x')(j'-j-1)\de+\theta (b(x')-b(x))\delta) \\
&\le C\|b\|_\infty((j'-j)\delta)^{-\frac{1}{\alpha}} p_\alpha((j'-j)\delta,x'-x)
\Theta(9[(j'-j-1)\delta]^{2-\frac{2}{\alpha}}\|b\|^2_\infty) \\
&=\hat{C}_T\|b\|_\infty((j'-j)\delta)^{-\frac{1}{\alpha}} p_\alpha((j'-j)\delta,x'-x).
\end{align*}

{\bf Step 2.} We are going to prove  \eqref{upp-co}.

Due to \eqref{FK}, it is not difficult to see that
\beg{equation}\label{upper-p}
\begin{split}
\tilde{p}^{(\de),x'}(j\de,x;j'\de,x')&=p_\al((j'-j)\de,x'-x-b(x')(j'-j)\de)\\
&\le C4^{d+\al} \Theta(\|b\|_\8^2((j'-j)\de)^{2-\ff{2}{\al}})p_\al((j'-j)\de,x'-x) \\
&=:\hat{C}_T p_\al((j'-j)\de,x'-x).
\end{split}
\end{equation}
Combining this with \eqref{A11}, we obtain from the definition of operator $\otimes_\de$ that
\beg{align*}
&|\tilde{p}^{(\de),x'}\otimes_\de H^{(\dd)}| (j\dd, x; j'\dd, x')\\
&= \de\sum_{k=j}^{j'-1}\int_{\R^d}\tilde{p}^{(\de),x'}(j\de,x;k\de,z) H^{(\dd)}(k\dd,z;j'\dd,x')\d z\\
&\le\de\sum_{k=j}^{j'-1}\int_{\R^d}\hat{C}_T p_\al((k-j)\de,z-x)\hat{C}_T\|b\|_\8((j'-k)\dd)^{\frac{-1}{\alpha}}p_{\alpha}((j'-k)\dd, x'-z)\d z\\
&=\de \hat{C}_T\|b\|_\8p_\al((j'-j)\dd, x'-x)\sum_{k=j}^{j'-1}((j'-k)\dd)^{\frac{-1}{\alpha}}\\
&\le\hat{C}_T\|b\|_\8p_\al((j'-j)\dd, x'-x)\int_{j\de}^{j'\de}(j'\dd-v)^{\frac{-1}{\alpha}}\d v\\
&=\hat{C}_T\|b\|_\8((j'-j)\de)^{1-\ff{1}{\al}}\be(1,1-\ff{1}{\al})p_\al((j'-j)\dd, x'-x).
\end{align*}
In the above equation $\beta(m,n):=\int_0^ts^{m-1}(1-s)^{n-1}\d s$ stands for the $beta$ function.

Using this and \eqref{A11},  we get
\beg{align*}
&|\tilde{p}^{(\de),x'}\otimes_\de H^{(\dd),(2)}|(j\dd,x;j'\dd,x')\\
&\leq \de\sum_{k=j}^{j'-1}\int_{\R^d}|\tilde{p}^{(\de),x'}\otimes_\de H^{(\dd)}(j\dd,x;k\dd,z)|| H^{(\dd)}(k\dd,z;j'\dd,x')|\d z\\
&\le \de\sum_{k=j}^{j'-1}\int_{\R^d}
\hat{C}_T^2\|b\|_\8^2((k-j)\de)^{1-\ff{1}{\al}}\be(1,1-\ff{1}{\al})p_\al((k-j)\dd, z-x)\\
&\qquad\qquad\qquad\times ((j'-k)\dd)^{\frac{-1}{\alpha}}p_{\alpha}((j'-k)\dd, x'-z)\d z\\
&=\de \hat{C}_T^2\|b\|_\8^2\be(1,1-\ff{1}{\al})\sum_{k=j}^{j'-1}((k-j)\de)^{1-\ff{1}{\al}}((j'-k)\dd)^{\frac{-1}{\alpha}}p_\al((j'-j)\de,x'-x)\\
&\le\hat{C}_T^2\|b\|_\8^2\be(1,1-\ff{1}{\al})
\int_{j\de}^{j'\de}(v-j\de)^{1-\ff{1}{\al}}(j'\dd-v)^{\frac{-1}{\alpha}}\d vp_\al((j'-j)\de,x'-x)\\
&=\hat{C}_T^2\|b\|_\8^2((j'-j)\de)^{2(1-\ff{1}{\al})}\be(1,1-\ff{1}{\al})
\be(2-\ff{1}{\al},1-\ff{1}{\al})p_\al((j'-j)\de,x'-x).
\end{align*}
By an induction argument, one has
\beg{align*}
&|\tilde{p}^{(\de),x'}\otimes_\de H^{(\dd),(m)}|(j\dd,j'\dd,x,x')\\
&\le\hat{C}_T^m\|b\|_\8^m((j'-j)\de)^{m(1-\ff{1}{\al})}p_\al((j'-j)\de,x'-x)\prod_{i=1}^m\be(i-\ff{i-1}{\al}, 1-\ff{1}{\al})\\
&=\hat{C}_T^m\|b\|_\8^m((j'-j)\de)^{m(1-\ff{1}{\al})}p_\al((j'-j)\de,x'-x)\prod_{i=1}^m\ff{\Ga(i-\ff{i-1}{\al})\Ga(1-\ff{1}{\al})}{\Ga(i+1-\ff{i}{\al})}\\
&=\hat{C}_T^m\|b\|_\8^m((j'-j)\de)^{m(1-\ff{1}{\al})}\ff{\Ga^m(1-\ff{1}{\al})}{\Ga(1+m(1-\ff{1}{\al}))}p_\al((j'-j)\de,x'-x).
\end{align*}
Therefore, \eqref{upp-co} is proved, and \eqref{C1} follows from \eqref{C6} and \eqref{upp-co}.
\end{proof}
We are now  in the position to prove Theorem \ref{lem0}.
\begin{proof}[Proof of Theorem \ref{lem0}]
For fixed $t>0$, there is an integer $k\ge0$ such that
$t\in[k\dd,(k+1)\dd).$ It follows from \eqref{FK} that
\begin{equation}\label{A02}
\begin{split}
p^{(\dd)}(k\dd,x;t,y)
&= p_\alpha(t-k\delta,y-x-b(x)(t-k\delta))\\
&\le C4^{d+\al} \Theta((t-k\delta)^{2-\frac{2}{\alpha}}\|b\|^2_\infty)p_\alpha(t-k\delta,{y-x})\\
&=C_1p_\alpha(t-k\delta,{y-x}).
\end{split}
\end{equation}
Note that \eqref{C1} implies
\begin{equation}\begin{split}\label{A02'}
p^{(\dd)}(j\dd,x;j'\dd,x')&\leq \sum_{m=0}^{j'-j}\ff{[\hat{C}_T\|b\|_\8T^{(1-\ff{1}{\al})}\Ga(1-\ff{1}{\al})]^m}{\Ga(1+m(1-\ff{1}{\al}))}p_\al((j'-j)\de,x'-x)\\
&\le C_2 p_{\alpha}((j'-j)\dd, x'-x),~~~j'>j,~x,x'\in\R^d.
\end{split}\end{equation}
Combining this with \eqref{A02} and the Chapman-Kolmogrov equation, we obtain
\begin{equation*}
\begin{split}
p^{(\dd)}(j\dd,x;t,y)&=\int_{\R^d}p^{(\dd)}(j\dd,x;\lfloor
t/\dd\rfloor\dd,z)p^{(\dd)}(\lfloor t/\dd\rfloor\dd,z;t,y)\d z\\
&\le C_1C_2\int_{\R^d}p_\alpha(t-\lfloor t/\dd\rfloor\delta,{y-z})p_{\alpha}((\lfloor t/\dd\rfloor-j)\dd, z-x)\d z\\
&=Cp_\al(t-j\de,y-x).
\end{split}
\end{equation*}
The proof is therefore completed.
\end{proof}
\section{Proof of Theorem \ref{th1}}
Before finishing the proof of Theorem \ref{th1}, we prepare some auxiliary lemmas.
The first lemma below plays a crucial role in the proof of Theorem \ref{th1}.
\beg{lem}\label{lem1}
Assume {\bf (A1)}-{\bf (A2)}, and let $T>0$ be fixed. Then
there exists a constant  $C_T>0 $ such that for any $\epsilon\in(0,1)$,
\begin{equation} \label{b-b}
\int_0^T\mathbb{E}|b(X^{(\dd)}_t)-b(X^{(\dd)}_{t_\dd})|^2\d t\leq C_T\left(
 \delta^{\frac{2\beta}{\alpha }}1_{\{2\beta<\alpha\}}+ \E(S_1^{\frac{\alpha}{2}\epsilon})\delta^{\epsilon}1_{\{2\beta\geq \alpha\}}\right).
 \end{equation}
\end{lem}

\begin{proof}
Observe  that
\begin{equation*}
\begin{split}
\int_0^T\mathbb{E}|b(X^{(\dd)}_t)-b(X^{(\dd)}_{t_\dd})|^2\d
t&=\int_0^\dd\mathbb{E}|b(X^{(\dd)}_t)-b(X^{(\dd)}_0)|^2\d
t\\
&\quad+\sum_{k=1}^{\lfloor
T/\dd\rfloor}\int_{k\dd}^{T\wedge(k+1)\dd}\mathbb{E}|b(X^{(\dd)}_t)-b(X^{(\dd)}_{k\dd})|^2\d
t.
\end{split}
\end{equation*}
It follows from {\bf (A1)} that
\begin{equation}\label{F2}
\int_0^\dd\mathbb{E}|b(X^{(\dd)}_t)-b(X^{(\dd)}_0)|^2\d t\le
4\|b\|_\8^2\dd.
\end{equation}
For  $t\in[k\dd,(k+1)\dd)$,  using the independence
between $X^{(\dd)}_{k\dd}$ and $W_{S_t}-W_{S_{k\dd}}$, and
applying
 Theorem \ref{lem0}, we derive that
\begin{equation}\label{D8}
\begin{split}
&\mathbb{E}|b(X^{(\dd)}_t)-b(X^{(\dd)}_{k\dd})|^2\\
&=\E|b(X^{(\dd)}_{k\dd}+b(X^{(\dd)}_{k\dd})(t-k\dd)+(W_{S_t}-W_{S_{k\dd}}))-b(X^{(\dd)}_{k\dd})|^2\\
&=\int_{\R^d}\int_{\R^d}|b(y+z)-b(y)|^2 p^{(\dd)}(0,x;k\dd,y)p^{(\de)}(k\delta,y;t,z+y)\d y\d z\\
&\le C\int_{\R^d}\int_{\R^d}|b(y+z)-b(y)|^2p_{\al}(k\de,y-x)p_\al(t-k\de,z)\d y\d z.
\end{split}
\end{equation}
By {\bf(A1)}, {\bf(A2)}, H\"older's inequality, \eqref{naao} and \eqref{bb},  we obtain
\beg{align*}
&\int_{\R^d}|b(y+z)-b(y)|^2p_{\al}(k\de,y-x)\d y\\
&\le\Big\{\Big(\int_{\R^d}|b(y+z)-b(y)|^p\d y\Big)^{\ff{2}{p}}\Big(\int_{\R^d}p_\al(k\de,y-x)^{\ff{p}{p-2}}\d y\Big)^{\ff{p-2}{p}}\Big\}
\\
&\le C(k\delta)^{-2d/(\alpha p)}\|b\|_{\be,p}^2(|z|^{2\be}\wedge 1).
\end{align*}
We therefore infer from \eqref{D8} that for any $\epsilon\in(0,1)$ and $t\in[k\delta,(k+1)\delta)$,
\beg{equation}\label{D9}
\begin{split}
&\mathbb{E}|b(X^{(\dd)}_t)-b(X^{(\dd)}_{k\dd})|^2\\
&\le C(k\delta)^{-2d/(\alpha p)}\|b\|_{\be,p}^2\int_{\R^d}\{|z|^{2\be}\wedge 1\}p_\al(t-k\de,z)\{1_{\{2\be\ge\al\}}+1_{\{2\be<\al\}}\}\d z\\
&\leq C(k\delta)^{-2d/(\alpha p)}\|b\|_{\be,p}^2\Big(\E(|W_{S_\delta}|^{2\beta}\wedge 1)1_{\{2\be\ge\al\}}+\de^{\ff{2\be}{\al}}1_{\{2\be<\al\}}\Big)\\
&\leq C(k\delta)^{-2d/(\alpha p)}\|b\|_{\be,p}^2\Big(\E(|W_{S_\delta}|^{\alpha\epsilon})1_{\{2\be\ge\al\}}+\de^{\ff{2\be}{\al}}1_{\{2\be<\al\}}\Big)\\
&= C(k\delta)^{-2d/(\alpha p)}\|b\|_{\be,p}^2\Big(\E|W_1|^{\alpha\epsilon}\E(S_1^{\frac{\alpha}{2}\epsilon})\delta^{\epsilon}1_{\{2\be\ge\al\}}+\de^{\ff{2\be}{\al}}1_{\{2\be<\al\}}\Big),
\end{split}
\end{equation}
where the second inequality is due to the fact that for $2\be<\al$,
\beg{align*}
\int_{\R^d}\{|z|^{2\be}\wedge 1\}p_\al(t,z)\d z
&\asymp\int_{\R^d}\{|z|^{2\be}\wedge 1\}t(t^{\ff{1}{\al}}+|z|)^{-d-\al}\d z\\
&\le C\int_0^\8t\ff{r^{2\be+d-1}}{(t^{\ff{1}{\al}}+r)^{d+\al}}\d r\\
&=\Big(\int_0^{t^{\ff{1}{\al}}}+\int_{t^{\ff{1}{\al}}}^\8\Big)\ff{tr^{2\be+d-1}}{(t^{\ff{1}{\al}}+r)^{d+\al}}\d r\le Ct^{\ff{2\be}{\al}}
.
\end{align*}
Noting that $\int_{0}^{T} r^{-2d/(\alpha p)}\d r<\infty$ due to {\bf(A2)},
we arrive at
\beg{align*}
\sum_{k=1}^{\lfloor
T/\dd\rfloor}\int_{k\dd}^{T\wedge(k+1)\dd}\E|b(X^{(\dd)}_t)-b(X^{(\dd)}_{k\dd})|^2\d t\le C_T\left(\delta^{\frac{2\beta}{\alpha }}1_{\{2\beta<\alpha\}}+ \E|W_1|^{\alpha\epsilon}\E(S_1^{\frac{\alpha}{2}\epsilon})\delta^{\epsilon}1_{\{2\beta\geq \alpha\}}\right).
\end{align*}
This combined with \eqref{F2} implies \eqref{b-b}.
\end{proof}


 Next, we use Theorem \ref{lem0}
to derive the Krylov estimate and  the Khasminskii estimate of  $(X_t^{(\dd)})_{t\ge0}$, see
\cite{GM,KR,XZ,Z,Z2} for more results about Krylov's estimate and Khasminskii's estimate.

\beg{lem}\label{Kry}
Assume {\bf (A1)}. Then,  for
 any $q>(d/\alpha\vee1)$,  there exist constants $C,c>0$ such that Krylov's estimate
 \begin{equation}\label{OO}
\E\Big(\int_s^t|f(X_r^{(\dd)})|\d r\Big|\F_s\Big)\le
C\,\|f\|_{q} (t-s)^{1-d/(\alpha q)},~~~f\in L^q(\R^d), 0\le s\le t\le T,
\end{equation}
holds,
which implies the Khasminskii  estimate
\begin{equation}\label{D1}
\E\exp\left(\ll\int_0^T|f(X^{(\dd)}_t)| \d t\right)\le 2^{1+  T
(c\ll\|f\|_{ q})^{\frac{1}{1-d/(\alpha q)}}},~~~~f\in L^q(\R^d), \ll>0.
\end{equation}
\end{lem}

\begin{proof}
For $0\le s\le t\le T$, note that
\begin{equation*}
\begin{split}
\E\Big(\int_s^t|f(X_r^{(\dd)})|\d
r\Big|\F_s\Big)&=\E\Big(\int_s^{t\wedge(s_\dd+\dd)}|f(X_r^{(\dd)})|\d
r\Big|\F_s\Big)+\E\Big(\int_{t\wedge(
s_\dd+\dd)}^t|f(X_r^{(\dd)})|\d r\Big|\F_s\Big)\\
&=:I_1(s,t)+I_2(s,t).
\end{split}
\end{equation*}
	For $t\in[s,s_\dd+\dd]$,
\begin{equation*}
X_r^{(\dd)}=X_{s_\dd}^{(\dd)}+b(X_{s_\dd}^{(\dd)})(r-s_\dd)+(W_{S_s}-W_{S_{s_\dd}})+(W_{S_r}-W_{S_s}),~~r\in[s,s_\dd+\dd),
\end{equation*}
in view of the independence between $W_{S_r}-W_{S_s}$ and $\sF_s$,
we derive from \eqref{naao} and H\"{o}lder's inequality that for any $q>d/\alpha$,
\begin{equation}\label{H2}
\begin{split}
I_1(s,t)
&=\int_s^{t\wedge(s_\dd+\dd)}\int_{\R^d}f(x+b(x)(r-s_\de)+w+z)p_\alpha(r-s, z)\d
z\Big|_{x=X_{s_\dd}^{(\dd)}}^{w=W_s-W_{s_\dd}}\d r\\
&\le\|f\|_{q}\int_s^{t\wedge(s_\dd+\dd)}(r-s)^{-d/\alpha+d(q-1)/(\alpha q)}\d
r\le \frac{(t-s)^{1-d/(\alpha q)}}{1-d/(\alpha q)}\|f\|_{q}.
\end{split}
\end{equation}
For $t>s_\dd+\dd$, let
$X_{k\dd,r}^{(\dd),x}$ be the EM scheme determined by \eqref{E1}
with $X_{k\dd,k\dd}^{(\dd),x}=x$.  According to the Markov property, it is not difficult to see that
\begin{equation*}
\begin{split}
I_2(s,t)&\le \int_{
s_\dd+\dd}^t\E\Big(|f(X_r^{(\dd)})|\Big|\F_s\Big)\d r=\int_{
s_\dd+\dd}^t\E\Big(\E\Big(|f(X_r^{(\dd)})|\Big|\F_{s_\dd+\delta}\Big)\big|\F_s\Big)\d
r\\
&=\int_{ s_\dd+\dd}^t\E\Big(
\E|f(X_{s_\dd+\dd,r}^{(\dd),x})|\Big|_{x=X_{s_\dd+\dd}^{(\dd)}}
\Big|\F_s\Big)\d r.
\end{split}
\end{equation*}
Applying Theorem \ref{lem0} and H\"older's inequality, we conclude that
\begin{equation*}
\begin{split}
\E|f(X_{s_\dd+\dd,r}^{(\dd),x})|&=\int_{\R^d}|f(y)|
 p^{(\delta)}(s_\dd+\dd, x;r,y)\d y\\
&\le C\int_{\R^d}|f(y)|
 p_\alpha(r-s_\dd-\dd, y-x)\d y
 \le
C(r-s_\dd-\dd)^{-\ff{d}{\alpha q}}\|f\|_{q}.
\end{split}
\end{equation*}
Therefore, it holds that
\begin{equation}\label{H3}
\begin{split}
I_2(s,t)\leq C
(t-s)^{1-d/(\alpha q)}\|f\|_{q},
\end{split}
\end{equation}
which, together with \eqref{H2}, implies \eqref{OO}.

The remaining procedure of deriving \eqref{D1} from \eqref{OO} is standard. For readers' convenience, we sketch it here. For each $k\ge1$,
applying inductively \eqref{OO} gives
\begin{equation}\label{S3}
\begin{split}
\E\bigg(\bigg(\int_s^t|f(X^{(\dd)}_r)| \d r\Big)^k\bigg|\F_s\bigg)
&=k! \E\Big(
\int_{\triangle_{k-1}(s,t)}|f(X^{(\dd)}_{r_1})|\cdots|f(X^{(\dd)}_{r_{k-1}})|\d r_1\\
&\qquad\cdots\d r_{k-1}
\times\E\Big(\int_{r_{k-1}}^t|f(X^{(\dd)}_{r_k})|\d
r_k\Big|\F_{r_{k-1}}\Big)\Big|\F_s\Big)\\
&\le k!(C (t-s)^{1-d/(\alpha q)}\|f\|_{q}
)^k,~~~~0\le s\le t\le T,
\end{split}
\end{equation}
where
\begin{equation*}
\triangle_k(s,t):=\{(r_1,\cdots,r_{k})\in\R^{k}:s\le r_1\le\cdots\le
r_{k}\le t\}.
\end{equation*}
Taking $\dd_0=(2C\ll\|f\|_{q})^{-\frac{1}{1-d/(\alpha q)}}$, and combining this with \eqref{S3}, we derive
\begin{equation}\label{D7}
\E\Big(\exp\Big(\ll\int_{(i-1)\dd_0}^{i\dd_0\wedge
T}|f(X^{(\dd)}_t)| \d
t\Big)\Big|\F_{(i-1)\dd_0}\Big)\le\sum_{k=0}^\8\ff{1}{2^k}=2,~~~i\ge1,
\end{equation}
which further implies that
\begin{equation}\label{f2}
\begin{split}
\E\exp\bigg(\ll\int_0^T|f(X^{(\dd)}_t)| \d t\bigg)
&=\E\Big\{\E\bigg(\exp\bigg(\int_0^T|f(X^{(\dd)}_t)| \d
t\bigg)\Big|\sF_{\lfloor T/\dd_0\rfloor\dd_0}\Big\}\\
&=\E\bigg(\exp\bigg(\ll\sum_{i=1}^{\lfloor
T/\dd_0\rfloor}\int_{(i-1)\dd_0}^{ i\dd_0 }|f(X^{(\dd)}_t)| \d
t\bigg)\\
&\quad\times \E\bigg(\exp\bigg(\ll\int_{\lfloor
T/\dd_0\rfloor\dd_0}^{T }|f(X^{(\dd)}_t)| \d
t\bigg)\Big|\F_{\lfloor T/\dd_0\rfloor\dd_0}\bigg)\bigg)\\
&\le2\,\E\exp\bigg(\ll\sum_{i=1}^{\lfloor
T/\dd_0\rfloor}\int_{(i-1)\dd_0}^{ i\dd_0 }|f(X^{(\dd)}_t)| \d
t\bigg)\\
&\le\cdots\le 2^{1+T/\dd_0  }.
\end{split}
\end{equation}
Therefore, \eqref{D1} holds.
\end{proof}

The following lemma is concerned with Krylov's and Khasminskii's estimates for
the solution process $(X_t)_{t\ge0}$ to \eqref{1.1}, which is more or less
standard; see, for instance, \cite{GM,KR,XZ,Z,Z2}. Whereas, we
herein state them and provide a sketch of its proof by using explicit upper bound of heat kernel.
\begin{lem}\label{Kam}
Assume {\bf (A1)}. Then for any $q>(d/\alpha\vee1)$,
\begin{align}\label{kry}\E\Big(\int_s^t|f(X_r)|\d r\Big|\F_s\Big)\le
C\,\|f\|_{q} (t-s)^{1-d/(\alpha q)},~~~f\in L^q(\R^d), 0\le s\le t\le T,
\end{align}
and
\begin{equation}\label{f1}
\E\exp\Big(\ll\int_0^T|f(X_t)| \d t\Big)\le 2^{1+  T (\ll c\|f\|_{q})^{\frac{1}{1-d/(\alpha q)}}},\ \ f\in L^q(\R^d), \lambda>0
\end{equation}
hold for some constants $C,c>0$.
\end{lem}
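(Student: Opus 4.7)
The plan is to mirror the strategy used for the Euler--Maruyama scheme in Lemma \ref{Kry}, replacing the discrete transition density bound of Theorem \ref{lem0} by its continuous-time counterpart. Specifically, under assumption {\bf (A1)} one has the heat kernel upper bound
\begin{equation*}
p(s,x;t,y)\le C p_\alpha(t-s,y-x),\qquad 0\le s<t\le T,\ x,y\in\R^d,
\end{equation*}
for the transition density $p(s,x;t,\cdot)$ of $X_t$ solving \eqref{1.1}. This is standard for bounded drifts via the continuous parametrix method used in \cite{KM,LM} and can equivalently be obtained by letting $\delta\to 0$ in Theorem \ref{lem0}; I would simply invoke it.

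Given this kernel bound, the Krylov estimate \eqref{kry} follows by conditioning and H\"older's inequality. By the Markov property, for $r>s$,
\begin{equation*}
\E(|f(X_r)|\,|\,\F_s)=\int_{\R^d}|f(y)|p(s,X_s;r,y)\,\d y\le C\int_{\R^d}|f(y)|p_\alpha(r-s,y-X_s)\,\d y.
\end{equation*}
Applying H\"older's inequality with exponents $q,q/(q-1)$ and using \eqref{naao} with $p=q/(q-1)$ (valid since $q\ge 1$) gives the bound $C\|f\|_q(r-s)^{-d/(\alpha q)}$. Integrating in $r$ over $[s,t]$ yields \eqref{kry}, where the convergence of the time integral requires precisely $q>d/\alpha$.

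For the Khasminskii bound \eqref{f1}, I would repeat verbatim the iteration argument displayed in \eqref{S3}--\eqref{f2} of Lemma \ref{Kry}. Iterated application of \eqref{kry} on the simplex $\triangle_{k-1}(s,t)$ yields
\begin{equation*}
\E\Big(\Big(\int_s^t|f(X_r)|\d r\Big)^{k}\Big|\F_s\Big)\le k!\big(C(t-s)^{1-d/(\alpha q)}\|f\|_q\big)^{k}.
\end{equation*}
Choosing $\delta_0=(2C\lambda\|f\|_q)^{-1/(1-d/(\alpha q))}$, summing the series on $[(i-1)\delta_0,i\delta_0]$ gives a conditional bound of $2$, and partitioning $[0,T]$ into $\lceil T/\delta_0\rceil$ pieces and using the tower property yields \eqref{f1}.

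The only nontrivial step is the heat kernel bound for $X_t$, which I expect to be the main obstacle only if one wants a self-contained derivation; the authors indicate this is standard, so I would cite it and focus the written proof on the H\"older--iteration mechanics, which are the same as in Lemma \ref{Kry} but free of the discretization error terms.
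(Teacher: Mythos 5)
Your proposal is correct and follows essentially the same route as the paper: the paper likewise reduces everything to the continuous-time heat kernel bound $p(0,x;t,y)\le C p_\alpha(t,y-x)$ (which it takes from \cite[Theorem 1.5 and Remark 1.6]{CZ1} rather than from a parametrix argument or a $\delta\to0$ limit of Theorem \ref{lem0}), then applies the Markov property, H\"older's inequality and \eqref{naao}, and finally repeats the iteration of \eqref{S3}--\eqref{f2} for the Khasminskii estimate. The only caveat is that your alternative suggestion of passing to the limit $\delta\to 0$ in Theorem \ref{lem0} would itself require justification, so citing the known kernel bound, as you primarily propose, is the right move.
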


\begin{proof}
By \cite[Theorem 1.5 and Remark 1.6]{CZ1}, we conclude that  the solution $X_t$  to SDE \eqref{1.1} has a probability density under condition {\bf(A1)}, and we denote it as $p(0,x;t,y)$. Moreover, it satisfies the following upper bounds,
	\begin{equation}\label{A0}
 	|p(0,x;t,y)|\le Cp_\alpha(t,y-x),~~~0< t\le T,
 		x,y\in\R^d.
 	\end{equation}
This, together with H\"older's inequality,
Markov property and \eqref{naao}, yields that
\begin{equation}\label{f3}
\begin{split}
\E\Big(\int_s^t|f(X_r)|\d
r\Big|\F_s\Big)&=\int_s^t\E\Big(|f(X_r^{s,x})|\Big|_{x=X_s}\Big)\d r\\
&\le C\int_s^t\int_{\R^d}|f(y)|p_\alpha(r-s,y-x)\d y\Big|_{x=X_s}\d r\\
&\le C(t-s)^{1-d/(\alpha q)}\|f\|_{q},
\end{split}
\end{equation}
where $(X^{s,x}_t)_{t\ge s}$ stands for the solution to \eqref{1.1}
with the initial value $X_s^{s,x}=x.$ By repeating the same procedure in the proof of \eqref{D1}, we can derive \eqref{f2}.
\end{proof}

 For a locally integrable function
$h:\R^d\to\R,$ the Hardy-Littlewood maximal operator $\mathscr{M}h$
is defined as below
\begin{equation*}
(\mathscr{M}h)(x)=\sup_{r>0}\ff{1}{|B_r(x)|}\int_{B_r(x)}h(y)\d
y,~~~~x\in\R^d,
\end{equation*}
where $B_r(x) $  is the ball with the radius $r$ centered at the
point $x$ and $|B_r(x)|$ denotes volume
of $B_r(x)$.
According to \cite[Lemma 5.4]{Z2}, the following Hardy-Littlewood
maximum theorem  holds.

\begin{lem}
There exists a constant $C>0$ such that for any continuous and weak differential function $f:\R^d\to\R$,
\begin{equation}\label{S1}
|f(x)-f(y)|\le C|x-y|\{(\mathscr{M}|\nn f|)(x)+(\mathscr{M}|\nn
f|)(y)\},~~~\mbox{ a.e. } x,y\in\R^d.
\end{equation}
Moreover, there exists a constant
$C_q>0$ such that for any $q>1$ and $f\in L^q(\R^d)$,
\begin{equation}\label{S2}
\|\mathscr{M}f\|_{q}\le C_q\|f\|_{q}.
\end{equation}
\end{lem}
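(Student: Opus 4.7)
The statement bundles together two classical harmonic-analysis facts, so the plan is to handle them separately with standard tools. For the strong-type maximal bound \eqref{S2}, I would invoke the Hardy--Littlewood maximal theorem: first establish the weak-type $(1,1)$ estimate $|\{\mathscr{M}f>\lambda\}|\le C\lambda^{-1}\|f\|_1$ by covering the level set $\{\mathscr{M}f>\lambda\}$ with a Vitali-type disjoint subcollection of balls realising the defining supremum at each point, and then combine this with the trivial $L^\infty$ bound $\|\mathscr{M}f\|_\infty\le\|f\|_\infty$ via the Marcinkiewicz interpolation theorem to conclude $\|\mathscr{M}f\|_q\le C_q\|f\|_q$ for every $q\in(1,\infty]$.

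The pointwise inequality \eqref{S1} is the subtler half, and the plan is a standard chaining argument on dyadic balls. After mollifying $f$ and restricting attention to common Lebesgue points of $f$ and $|\nabla f|$, I would set $r=|x-y|$ and split
\[
|f(x)-f(y)|\le|f(x)-f_{B(x,r)}|+|f_{B(x,r)}-f_{B(y,2r)}|+|f_{B(y,2r)}-f(y)|,
\]
where $f_B:=|B|^{-1}\int_B f$ denotes the ball average. For the two outer terms, the $L^1$-Poincar\'e inequality $|B|^{-1}\int_B|f-f_B|\le C\,r_B|B|^{-1}\int_B|\nabla f|$ applied along the nested sequence $B(x,2^{-k}r)$ gives, by a telescoping sum,
\[
|f(x)-f_{B(x,r)}|\le\sum_{k\ge0}|f_{B(x,2^{-k-1}r)}-f_{B(x,2^{-k}r)}|\le C\sum_{k\ge0}2^{-k}r\,\mathscr{M}|\nabla f|(x)\le Cr\,\mathscr{M}|\nabla f|(x),
\]
and the analogous bound with $y$ in place of $x$. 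For the middle term, using the inclusion $B(x,r)\subset B(y,2r)$ one has $|f_{B(x,r)}-f_{B(y,2r)}|\le 2^d|B(y,2r)|^{-1}\int_{B(y,2r)}|f-f_{B(y,2r)}|\le Cr\,\mathscr{M}|\nabla f|(y)$ by the same Poincar\'e inequality on $B(y,2r)$. Summing the three contributions yields \eqref{S1}.

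The main obstacle, as I see it, is the bookkeeping required to pass from smooth $f$ to a merely weakly differentiable one and to guarantee that the pointwise estimate holds almost everywhere. This is secured by choosing $x,y$ to be common Lebesgue points of $f$ and of $\mathscr{M}|\nabla f|$, using that for $f\in W^{1,1}_{\mathrm{loc}}$ the Lebesgue differentiation theorem ensures $f_{B(x,r)}\to f(x)$ as $r\downarrow0$ almost everywhere. Inequality \eqref{S2}, in turn, guarantees that $\mathscr{M}|\nabla f|$ is almost everywhere finite whenever $|\nabla f|\in L^q$ for some $q>1$, so the right-hand side of \eqref{S1} makes sense and the two parts of the lemma dovetail naturally.
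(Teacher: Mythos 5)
Your proposal is correct. Note that the paper itself offers no proof of this lemma---it is quoted verbatim from \cite[Lemma 5.4]{Z2}---so there is no internal argument to compare against; what you have written is the standard proof that such a citation points to. Your treatment of \eqref{S2} (weak-type $(1,1)$ via a Vitali covering of the level set, the trivial $L^\infty$ bound, and Marcinkiewicz interpolation) is the classical Hardy--Littlewood maximal theorem, and your dyadic chaining of ball averages combined with the $(1,1)$-Poincar\'e inequality is a correct and standard route to \eqref{S1}; the reference's own proof runs instead through the pointwise Riesz-potential bound $|f(x)-f_B|\le C\int_B|\nabla f(z)|\,|x-z|^{1-d}\,\d z$, but the two arguments are essentially equivalent, both reducing to summing a geometric series of averages of $|\nabla f|$ over shrinking balls. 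You also correctly identify the only genuinely delicate point, namely that \eqref{S1} is asserted only almost everywhere and must be interpreted at common Lebesgue points of $f$ and $|\nabla f|$, with $f_{B(x,2^{-k}r)}\to f(x)$ justified by the Lebesgue differentiation theorem for $f\in W^{1,1}_{\rm loc}$. No gaps.
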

We cite the following lemma  \cite[Lemma 2.3]{Z1} for future use.
\begin{lem}\label{du} Let $q>1$ and $\gamma\in[1,2]$. There exists a constant $C=C(q,\gamma,d)$ such that for any $f\in H_q^{\gamma}$,
$$\|f(\cdot+z)-f(\cdot)\|_{1,q}\leq |z|^{\gamma-1}\|f\|_{\gamma,q}.$$
\end{lem}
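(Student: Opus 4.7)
The plan is to prove the inequality via the equivalent characterization $\|g\|_{1,q} \asymp \|g\|_{q} + \|\nabla g\|_{q}$ of the Bessel-potential norm on $H_q^1$, applying the existing translation estimate \eqref{bb} separately to $f$ and to its gradient. Set $\beta := \gamma - 1 \in [0,1]$ and $g(x) := f(x+z) - f(x)$; the point is that $\beta$ lies exactly in the range in which \eqref{bb} is directly applicable, so both pieces can be handled by a single lemma.

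For the zeroth-order piece, I would apply \eqref{bb} with exponent $\beta$ to $f$, followed by the continuous embedding $H_q^{\gamma} \subset H_q^{\beta}$ (a consequence of the $L^q$-boundedness of the Bessel potential $(I-\De)^{-(\gamma-\beta)/2}$), to obtain
$$\|g\|_{q} \leq c(|z|^{\beta}\wedge 1)\|f\|_{\beta,q} \leq C(|z|^{\beta}\wedge 1)\|f\|_{\gamma,q}.$$
For the first-order piece, since $\nabla g = (\nabla f)(\cdot + z) - (\nabla f)(\cdot)$, I would apply \eqref{bb} componentwise to $\nabla f$:
$$\|\nabla g\|_{q} \leq c(|z|^{\beta}\wedge 1)\|\nabla f\|_{\beta,q}.$$
Then I would reduce $\|\nabla f\|_{\beta,q}$ to $\|f\|_{\gamma,q}$ by commuting Fourier multipliers: since $(I-\De)^{\beta/2}$ commutes with $\nabla$, one has
$$\|\nabla f\|_{\beta,q} = \|\nabla(I-\De)^{\beta/2}f\|_{q} \leq C\|(I-\De)^{\beta/2}f\|_{1,q} \asymp \|(I-\De)^{(1+\beta)/2}f\|_{q} = \|f\|_{\gamma,q}.$$

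Summing the two bounds and noting that $(|z|^{\beta}\wedge 1) \leq |z|^{\gamma-1}$ for every $z \in \R^{d}$ (equality when $|z|\leq 1$, while $|z|^{\gamma-1}\geq 1$ otherwise), I arrive at $\|g\|_{1,q} \leq C|z|^{\gamma-1}\|f\|_{\gamma,q}$ with $C = C(q,\gamma,d)$, as required. The only genuinely nontrivial ingredient is the reduction $\|\nabla f\|_{\beta,q} \leq C\|f\|_{\gamma,q}$, which rests on the Fourier-multiplier definition of $H_q^{s}$ and the $L^q$-boundedness of the Riesz-type operator $\nabla(I-\De)^{-1/2}$; all remaining steps are direct applications of \eqref{bb} and the triangle inequality, so no further obstacles are expected.
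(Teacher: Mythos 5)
Your argument is correct. Note, however, that the paper does not prove this lemma at all --- it is simply cited from \cite[Lemma 2.3]{Z1} --- so what you have produced is a self-contained proof where the authors rely on an external reference. Your route is the natural one: split $\|g\|_{1,q}$ into $\|g\|_q+\|\nabla g\|_q$ with $g=f(\cdot+z)-f(\cdot)$, apply the translation estimate \eqref{bb} with exponent $\beta=\gamma-1\in[0,1]$ to $f$ and (componentwise) to $\nabla f$, and close the loop with the identities $\|\nabla f\|_{\beta,q}=\|\nabla(I-\De)^{\beta/2}f\|_q$ and $\|(I-\De)^{(1+\beta)/2}f\|_q=\|f\|_{\gamma,q}$, using the $L^q$-boundedness of the Mikhlin multiplier $\nabla(I-\De)^{-1/2}$. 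All steps check out, including the elementary observation $|z|^{\beta}\wedge 1\le|z|^{\gamma-1}$, which is where the hypothesis $\gamma\ge 1$ enters. The one caveat worth recording is that the multiplier step requires $q<\infty$; since the paper's statement only assumes $q>1$ and the embedding \eqref{bb} is stated up to $p=\infty$, you should either restrict to $1<q<\infty$ (which is how the lemma is actually used, with $q=p$ finite) or handle $q=\infty$ separately via the Lipschitz definition of $H^1_\infty$. This is a boundary issue, not a gap in the core argument.
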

To overcome the difficulty caused by the singularity of drift $b$, we give some results on Zvonkin's transformation. More precisely,
for any $\lambda >0$, consider the following elliptic equation for
$u^\ll:\R^d\to\R^d$
\beq\label{PDE}
\beg{split}
\L u^\ll+b+\nabla_{b}u^\ll=\lambda
u^\ll,
\end{split}\end{equation}
where $b$ is given in \eqref{1.1}, $\sL$ is defined as
\beg{align}\label{oper}
\L f(x)=\int_{\R^d-\{\bf0\}}\{f(x+z)-f(x)-\<\nabla f(x),z\>1_{\{|z|\leq 1\}}\}\nu(\d z).
\end{align}
According to \cite[Theorem 4.11]{XZ} and Sobolev's embedding \eqref{embed}, we have the following lemma.
\begin{lem}\label{ua} Assume {\bf(A2)}. Then, for any  $\gamma\in((1+\alpha/2-\beta)\vee 1\vee (d/p-\beta+1),\alpha)$, there exists a constant $\lambda_0>0$ such that for any $\lambda\geq \lambda_0$ \eqref{PDE} has a
unique solution $u^{\lambda}\in H_p^{\gamma+\beta}$ satisfying
\begin{equation}\label{F3}
 \|\nn u^\ll\|_{\8}\le\frac{1}{2},\ \
 \|u^\ll\|_{{\gamma+\beta},p}\leq C_1\|b\|_{\beta,p},
\end{equation}
for some constant $C_1>0$.
\end{lem}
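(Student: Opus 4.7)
The strategy is a two-step reduction: obtain existence, uniqueness, and the $H_p^{\gamma+\beta}$-bound from \cite[Theorem 4.11]{XZ}, then deduce the pointwise gradient bound from the Sobolev embedding \eqref{embed}. The three lower restrictions on $\gamma$ play different roles, and it is worth separating them: $\gamma<\alpha$ is the maximal regularity gain afforded by the resolvent $(\lambda-\mathcal L)^{-1}$ of the rotationally invariant $\alpha$-stable generator $\mathcal L$; the bound $\gamma+\beta>1+\alpha/2$ is the Zvonkin/fixed-point threshold that renders the first-order perturbation $\nabla_b u^\lambda$ a lower-order term in the $H_p^\beta$ scale; and $\gamma>d/p-\beta+1$, equivalent to $\gamma+\beta-1>d/p$, is precisely what is needed to control $\nabla u^\lambda$ uniformly.

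First, I would invoke \cite[Theorem 4.11]{XZ} to conclude that, for all $\lambda\geq\tilde\lambda_0$ large enough, there exists a unique $u^\lambda\in H_p^{\gamma+\beta}$ solving \eqref{PDE}, together with an a priori estimate of the form
\begin{equation*}
\|u^\lambda\|_{\gamma+\beta,p}\leq C(\lambda)\|b\|_{\beta,p},
\end{equation*}
where $C(\lambda)\downarrow 0$ as $\lambda\to\infty$. The quantitative decay of $C(\lambda)$ is the technically delicate point; it comes from writing $u^\lambda=(\lambda-\mathcal L)^{-1}(b+\nabla_b u^\lambda)$ and using the resolvent bound $\|(\lambda-\mathcal L)^{-1}g\|_{\gamma+\beta,p}\lesssim \lambda^{-(\alpha-\gamma)/\alpha}\|g\|_{\beta,p}$, which allows absorbing $\nabla_b u^\lambda$ into the left-hand side (this uses $\|b\|_\infty<\infty$ and $\gamma+\beta>1+\alpha/2$).

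Second, by $\gamma+\beta-1>d/p$ and the second case of the Sobolev embedding \eqref{embed} applied componentwise to $\nabla u^\lambda\in H_p^{\gamma+\beta-1}$, we obtain
\begin{equation*}
\|\nabla u^\lambda\|_\infty\leq C_2\|u^\lambda\|_{\gamma+\beta,p}\leq C_2C(\lambda)\|b\|_{\beta,p}.
\end{equation*}
Enlarging $\tilde\lambda_0$ to some $\lambda_0$ so that $C_2 C(\lambda_0)\|b\|_{\beta,p}\leq 1/2$ yields $\|\nabla u^\lambda\|_\infty\leq 1/2$ for all $\lambda\geq\lambda_0$, and the remaining bound follows with $C_1:=C(\lambda_0)$.

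The main obstacle is the quantitative $\lambda$-dependence of the $H_p^{\gamma+\beta}$-estimate: if the cited theorem of XZ does not record it explicitly, one has to redo the fixed-point argument in $H_p^{\gamma+\beta}$ and track how the norm of the contraction $v\mapsto (\lambda-\mathcal L)^{-1}(b+\nabla_b v)$ shrinks as $\lambda$ grows, relying on the subcritical regularity gain $\alpha-\gamma>0$. Everything else (existence, uniqueness, and the transfer to a sup-norm gradient bound via \eqref{embed}) is, by contrast, essentially formal once the hypotheses on $\gamma$ are in place.
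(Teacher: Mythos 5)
Your proposal follows exactly the route the paper takes: the paper offers no written proof beyond citing \cite[Theorem 4.11]{XZ} for existence, uniqueness, and the $H_p^{\gamma+\beta}$ a priori bound with a constant vanishing as $\lambda\to\infty$, combined with the embedding \eqref{embed} (valid since $\gamma+\beta-1>d/p$) to convert this into $\|\nabla u^\lambda\|_\infty\le 1/2$ for $\lambda$ large. Your elaboration of the roles of the three lower bounds on $\gamma$ and of the resolvent decay $\lambda^{-(\alpha-\gamma)/\alpha}$ is consistent with that reference and adds correct detail the paper omits.
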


Now we are in position to complete the proof of Theorem \ref{th1}.
\begin{proof}[Proof of Theorem \ref{th1}]
Firstly, recall that the L\'evy-It\^o decomposition of $W_{S_t}$ is
\beg{align}\label{decomL}
W_{S_t}=\int_0^t\int_{\{|z|\le 1\}}z\til{N}(\d s,\d z)+\int_0^t\int_{\{|z|>1\}}zN(\d s,\d z),~~t\ge0,
\end{align}
here, $N$ is the Poisson random measure with compensator $\nu(\d z)\d t$.
 Set
$\theta^\ll(x):=x+u^\ll(x), x\in\R^d,$ and
$Z_t^{(\dd)}:=X_t-X^{(\dd)}_t$. According to It\^o's formula in \cite[Lemma 6.4]{XZ}, we obtain from
\eqref{PDE} that
\begin{equation*}
\begin{split}
\d\theta^\ll(X_t)&= \ll u^\ll(X_t)\d t+\d W_{S_t}+
\int_{\R^d- \{\bf0\}}[u^{\lambda}(X_{t-}+z)-u^\lambda(X_{t-})]\tilde{N}(\d t,\d z)\\
\d\theta^\ll(X^{(\dd)}_t)&=\ll u^\ll(X_t)\d t +\na \theta^{\la}(X_t^{(\de)})
( b(X^{(\dd)}_{t_\dd})-b(X^{(\dd)}_t))\d t\\
&+\d W_{S_t}+\int_{\R^d- \{\bf0\}}[u^\lambda(X^{(\delta)}_{t-}+z)-u^\lambda(X^{(\delta)}_{t-})]\tilde{N}(\d t, \d z).
\end{split}
\end{equation*}
Set $\bar{\theta^\la}(X_t,X_t^{(\de)}):=\theta^\ll(X_t)-\theta^\ll(X_t^{(\de)})$ and $g(x,z):=u^{\lambda}(x+z)-u^\lambda(x)$. Then,
it follows that
\beg{equation}\label{theta}
\begin{split}
\d\bar{\theta^\la}(X_t,X_t^{(\de)})
&=\ll(u^\ll(X_t)-u^\ll(X_t^{(\de)}))\d t+\na \theta^{\la}(X_t^{(\de)})(b(X^{(\dd)}_t)-b(X^{(\dd)}_{t_\dd}))\d t\\
&~~+\int_{|z|>1}
\left[g(X_{t-},z)-g(X^{(\delta)}_{t-},z)\right]\tilde{N}(\d t,\d z)\\
&~~+\int_{0<|z|\leq 1}\left[g(X_{t-},z)-g(X^{(\delta)}_{t-},z)
\right]\tilde{N}(\d t,\d z).
\end{split}
\end{equation}
 We obtain from \eqref{F3} that
 \beg{align}\label{upp-th}
 \ff{1}{4}|Z_t^{(\de)}|^2\le|\bar{\theta^\la}(X_t,X_t^{(\de)})|^2\le\ff{9}{4}|Z_t^{(\de)}|^2.
 \end{align}
 By It\^o's formula and \eqref{upp-th}, we arrive at
 \beg{align}\label{ZZ}
 |Z_t^{(\de)}|^2
 \nonumber&\leq 4|\bar{\theta^\la}(X_t,X_t^{(\de)})|^2\\ \nonumber
 &\le 8\la\int_0^t\<\bar{\theta^\la}(X_s,X_s^{(\de)}),u^\ll(X_s)-u^\ll(X_s^{(\de)})\>\d s\\\nonumber
 &+8\int_0^t\<\bar{\theta^\la}(X_s,X_s^{(\de)}),\na \theta^{\la}(X_s^{(\de)})(b(X_s^{(\de)})-b(X_{s_\de}^{(\de)}))\>\d s\\\nonumber
 &+4\int_0^t\int_{|z|\ge1}\Big|g(X_{s-},z)-g(X^{(\delta)}_{s-},z)\Big|^2\nu(\d z)\d s\\\nonumber
 &+4\int_0^t\int_{0<|z|\le1}\Big|g(X_{s-},z)-g(X^{(\delta)}_{s-},z)\Big|^2\nu(\d z)\d s\\
&+4\int_0^t\int_{|z|\ge1}\big\{|\hat{\theta^\la}(X_s,X_s^{(\de)},g)|^2-|\bar{\theta^\la}(X_s,X_s^{(\de)})|^2\big\}\til{N}(\d s,\d z)\\\nonumber
 &+4\int_0^t\int_{0<|z|\le1}\big\{|\hat{\theta^\la}(X_s,X_s^{(\de)},g)|^2-|\bar{\theta^\la}(X_s,X_s^{(\de)})|^2\big\}\til{N}(\d s,\d z)\\\nonumber
 &=:\sum_{i=1}^4I_i^{(\de)}(t)+M(t),
 \end{align}
where $\hat{\theta^\la}(X_t,X_t^{(\de)},g)=\bar{\theta^\la}((X_t,X_t^{(\de)})+g(X_{t-},z)-g(X^{(\delta)}_{t-},z))$.
 By means of  \eqref{F3} and \eqref{upp-th}, we obtain
 \beg{align}\label{I1}
 I_1^{(\de)}(t)\le6\la\int_0^t|Z_s^{(\de)}|^2\d s.
 \end{align}
 Similarly, by virtue of \eqref{F3}, \eqref{upp-th} and Young's inequality, we arrive at
 \beg{align}\label{I2}
  I_2^{(\de)}(t)\le C\Big\{\int_0^t|Z_s^{(\de)}|^2\d s+\int_0^t|b(X_s^{(\de)})-b(X_{s_\de}^{(\de)})|^2\d s\Big\}.
 \end{align}

Thanks to \cite[(2.9), (2.12)]{HL}, there exists a constant $ C(t,\nu) >0$ such that
\begin{equation}\label{I3}
\begin{split}
I_3^{(\de)}(t)&\le C\int_0^t\int_{|z|>1}\left|g(X_{u-},z)-g(X^{(\delta)}_{u-},z)\right|^2\d u\nu(\d z)\leq C(t,\nu)\int_0^t|Z_u^{(\dd)}|^2\d u.
\end{split}
\end{equation}
Let $\gamma\in ((1+\alpha/2-\beta)\vee 1\vee (d/p-\beta+1),\alpha)$.
Define
\beg{align*}U(x,z):=|z|^{1-\be-\ga}|\na u^\la(x+z)-\na u^\la(x)|.
\end{align*}
It follows that
\beg{align}\label{nag}
|\na g(\cdot,z)(x)|
&= U(x,z)|z|^{\be+\ga-1}.
\end{align}
Noting that
$(\scr M(f))^2(x)\leq \scr M(f^2)(x)$
due to Jensen's inequality. By \eqref{S1}, \eqref{nag} and \cite[(3.3)]{XZ} for $\mathbb{B}=L^2(\{0<|z|\leq 1\}, \nu)$, we get
\begin{equation*}
\begin{split}
&I_4^{(\de)}(t)\\
&=\int_0^t\int_{0<|z|\leq 1}\left|g(X_{s-},z)-g(X^{(\delta)}_{s-},z)\right|^2\d s\nu(\d z)\\
&\le 2C_2\int_0^t|Z_s^{(\delta)}|^2\Big\{\scr M\Big(\int_{0<|z|\leq 1}|\nabla g(\cdot,z)|^2\nu(\d z)\Big)(X_{s-})\\
&\qquad\qquad\qquad\qquad\qquad+\scr M\Big(\int_{0<|z|\leq 1}|\nabla g(\cdot,z)|^2\nu(\d z)\Big)(X^{(\delta)}_{s-})\Big\}\d s\\
&=2C_2\int_0^t|Z_s^{(\delta)}|^2\Big\{\scr M\Big(\int_{0<|z|\leq 1}|U(\cdot,z)|^2|z|^{2(\be+\ga-1)}\nu(\d z)\Big)(X_{s-})\\
&\qquad\qquad\qquad\qquad\qquad+\scr M\Big(\int_{0<|z|\leq 1}|U(\cdot,z)|^2|z|^{2(\be+\ga-1)}\nu(\d z)\Big)(X_{s-}^{(\de)})\Big\}\d s.
\end{split}
\end{equation*}
As a result, plugging this with \eqref{I1}-\eqref{I3} into \eqref{ZZ} gives that
\begin{align*}
\nonumber &|Z_t^{(\dd)}|^2\le C_3\int_0^t\sup_{r\in[0,s]}|Z_r^{(\dd)}|^2(\d
s+\d A_s)+\int_0^t C_3|b(X^{(\dd)}_s)-b(X^{(\dd)}_{s_\dd})|^2\d s+M_t,
\end{align*}
where $M_t$ is a local martingale, and \begin{align*}A_t&=\int_0^t\Big\{\scr M\Big(\int_{0<|z|\leq 1}|U(\cdot,z)|^2|z|^{2(\be+\ga-1)}\nu(\d z)\Big)(X_{s-})\\
&\qquad\qquad\qquad\qquad\qquad+\scr M\Big(\int_{0<|z|\leq 1}|U(\cdot,z)|^2|z|^{2(\be+\ga-1)}\nu(\d z)\Big)(X_{s-}^{(\de)})\Big\}\d s.
\end{align*}
By \eqref{S2} and Minkowski's inequality, we have
\begin{align*}\left\|\scr M\Big(\int_{0<|z|\leq 1}|U(\cdot,z)|^2|z|^{2(\be+\ga-1)}\nu(\d z)\Big)\right\|_{\frac{p}{2}}\leq C\int_{0<|z|\leq 1}\left\|U(\cdot,z)\right\|_p^2|z|^{2(\be+\ga-1)}\nu(\d z).
\end{align*}
This together with H\"older's inequality and the fact $\int_{0<|z|\leq 1}|z|^{2(\be+\ga-1)}\nu(\d z)<\infty$ due to  $2(\beta+\gamma-1)>\alpha$, we derive that for any $ \zeta>0$,
\beg{align*}
\E \exp\big\{\zeta A_t\}&\le\left(\E\exp\left\{2\zeta\int_0^t\scr M\Big(\int_{0<|z|\leq 1}|U(\cdot,z)|^2|z|^{2(\be+\ga-1)}\nu(\d z)\Big)(X_{s-})\d s\right\}\right)^{1/2}\\
&\times \left(\E\exp\left\{2\zeta\int_0^t\scr M\Big(\int_{0<|z|\leq 1}|U(\cdot,z)|^2|z|^{2(\be+\ga-1)}\nu(\d z)\Big)(X_{s-}^{(\delta)})\d s\right\}\right)^{1/2}\\
&\le 2^{\left\{1+t\left(2\zeta c\int_{0<|z|\leq 1}\left\|U(\cdot,z)\right\|_p^2|z|^{2(\be+\ga-1)}\nu(\d z)\right)^{\ff{\al p}{\al p-2d}}\right\}}\\
&\le 2^{\left\{1+t\left(2\zeta C\|b\|_{\beta,p}^2\right)^{\ff{\al p}{\al p-2d}}\right\}},
\end{align*}
where the second inequality is due to \eqref{D1}, \eqref{f1} for taking parameters $\lambda=2\zeta$ and $q=\ff{p}{2}$, and in the last display we used the fact that
\beg{align*}
\|U(\cdot,z)\|_p&=|z|^{1-\be-\ga}\Big(\int_{\R^d}|\na u^\la(x+z)-\na u^\la(x)|^p\d x\Big)^{1/p}\\\nonumber
&\le |z|^{1-\be-\ga}|z|^{\be+\ga-1}\|\na u\|_{\be+\ga-1,p}\le \|u\|_{\be+\ga,p}\leq  C\|b\|_{\be,p},
\end{align*}
which is due to Lemma \ref{ua} and Lemma \ref{du}.

Consequently, we deduce by stochastic Gronwall's inequality (see
e.g. \cite[Lemma 3.8]{XZ}) that, for $0<\kk'<\kappa<1$,
\begin{equation*}
\begin{split}
&\Big(\E\Big(\sup_{0\le s\le
t}|Z_s^{(\dd)}|^{2\kk'}\Big)\Big)^{1/\kk'}\\
&\le
\Big(\ff{\kk}{\kk-\kk'}\Big)^{1/\kk'}\Big(\E\e^{\kk
A_t/(1-\kk)}\Big)^{(1-\kk)/\kk}\times\int_0^t\Big\{C_3\E|b(X^{(\dd)}_s)-b(X^{(\dd)}_{s_\dd})|^2\Big\}\d s.
\end{split}
\end{equation*}
Taking $\kappa'=\ff{\eta}{2}$ and  combining with Lemma \ref{lem1} implies that \eqref{W1} holds.
\end{proof}

\beg{thebibliography}{99} {\small

\setlength{\baselineskip}{0.14in}
\parskip=0pt

\bibitem{BHY}Bao, J., Huang, X., Yuan, C., Convergence rate of Euler--Maruyama Scheme for SDEs with
H\"older-Dini continuous drifts,  {\it J. Theor. Probab.},   {\bf
	32} (2019),    848-871.

\bibitem{BHZ}Bao, J., Huang, X., Zhang, S., Convergence rate of EM algorithm for SDEs under integrability condition, {\it J. Appl. Probab.}, (2022), 1-24. doi:10.1017/jpr.2021.56

\bibitem{BG} Blumenthal, R.M., Getoor, R.K., Some theorems on stable processes, {\it Trans. Am. Math. Soc.}, {\bf
	95} (1960),  263-273.

\bibitem{CZ} Chen, Z, Zhang, X., Heat kernels and analyticity of non-symmetric jump diffusion semigroups,  {\it Probab. Theory Relat. Fields}, {\bf
	165} (2016),  267-312.
	
	\bibitem{CZ1} Chen, Z.,  Zhang, X., Heat kernels for time-dependent non-symmetric stable-like operators, {\it J. Math. Anal. Appl.},  {\bf 465} (2018), 1-21.

\bibitem{DKS} Dareiotis, K.,  Kumar, C.,  Sabanis, S., On
tamed Euler approximations of SDEs driven by L\'{e}vy noise with
applications to delay equations, {\it SIAM J. Numer. Anal.}, {\bf
	54} (2016),  1840-1872.

\bibitem{FGP}  Flandoli, M.,   Gubinelli, M.,   Priola,  E.,  Flow of diffeomorphisms
for SDEs with unbounded H\"{o}lder continuous drift, {\it Bull. Sci.
	Math.}, {\bf 134} (2010),   405-422.

\bibitem{GLN} Gottlich, S., Lux, K.,  Neuenkirch, A.,   The Euler scheme
for stochastic differential equations with discontinuous drift
coefficient: A numerical study of the convergence rate,
{\it Adv. Differ. Equ.}, {\bf  429} (2019), 1-21.

\bibitem{GM} Gy\"ongy, I.,   Martinez, T.,   On stochastic differential equations with locally unbounded drift,  {\it Czechoslovak Math.J.},
{\bf 51} (2001), 763-783.

\bibitem{HK}Halidias, N.,   Kloeden, P.~E., A note on the Euler-Maruyama
scheme for stochastic differential equations with a discontinuous
monotone drift coefficient, {\it BIT Numer. Math.},  {\bf48} (2008),  51-59.

\bibitem{HK1} Higham,D. J, Kloeden, P. E., Strong convergence rates for backward Euler on a class of nonlinear jump diffusion problems, {\it J. Comput. Appl. Math.}, {\bf 205} (2007), 949-956.

\bibitem{HL} Huang,  X.,  Liao, Z., The Euler-Maruyama method for S(F)DEs with H\"{o}lder drift and $\alpha$-stable noise, \textit{Stoch. Anal. Appl.}, {\bf 36} (2018), 28-39.

\bibitem{HW181} Huang, X., Wang, F.-Y., {Degenerate SDEs with singular drift and applications to Heisenberg groups,}  \textit{J. Differ. Equ.}, {\bf 265} (2018), 2745-2777.

\bibitem{KK}Knopova, V., Kulik, A., Parametrix construction of the transition probability density of the solution to an SDE driven by $\alpha$-stable noise, {\it Ann. Inst. H. Poincar\'e Probab. Statist. }, {\bf 54} (2018), 100-140.

\bibitem{KM}Konakov, V., Mammen, E., Local limits theorems for
transition densities of Markov chains converging to diffusions, {\it
	Probab. Theory Relat. Fields.}, {\bf 117} (2000), 551-587.
	
\bibitem{KM1}Konakov, V., Menozzi, S., Weak error for stable driven stochastic differential equations: Expansion of densities, {\it J. Theor. Probab.}, {\bf 24} (2011), 454-478.	
	
\bibitem{KP}Kloden, P.E., Platen, E., Numerical solutions of stochastic differential equations, {\it Springer, Berlin-Heidelberg.}	1992.

\bibitem{KR}   Krylov,  N.~V.,  R\"{o}ckner, M.,  Strong solutions of stochastic equations with singular time dependent drift,  {\it
	Probab. Theory Relat. Fields.}, {\bf 131} (2005), 154-196.

	\bibitem{KS}
	K\"{u}hn F., Schilling R.-L., Strong convergence of the Euler-Maruyama approximation for a class of L\'{e}vy-driven SDEs. {\it Stoch. Process  Their Appl.}, {\bf 129} (2019), 2654-2680.

\bibitem{LM} Lemaire, V., Menozzi, S., On some non asymptotic bounds
for the Euler scheme, {\it Electron. J. Probab.}, {\bf 15} (2010),
1645-1681.

\bibitem{LS}Leobacher, G.,   Sz\"olgyenyi, M.,  A numerical method for SDEs
with discontinuous drift, {\it BIT Numer. Math.}, {\bf 56} (2016), 151-162.

\bibitem{LS2} Leobacher, G.,   Sz\"olgyenyi, M.,   A strong order 1/2
method for multidimensional SDEs with discontinuous drift, {\it Ann.
	Appl. Probab.}, {\bf 27} (2017), 2383-2418.

\bibitem{LS3}Leobacher, G.,   Sz\"olgyenyi, M. , Convergence of the
Euler-Maruyama method for multidimensional SDEs with discontinuous
drift and degenerate diffusion coefficient, {\it Numer. Math.}, {\bf
	138}   (2018), 219-239.
	

\bibitem{PT}Pamen, O. M., Taguchi, D., Strong rate of convergence for the Euler-Maruyama approximation of SDE with H\"older continuous drift coefficient, {\it Stoch. Process Their Appl.}, {\bf 127} (2017), 2542-2559.

\bibitem{T1} Triebel, H., Interpolation theory, Functional Spaces, Differential Operators, \textit{North-Holland Publishing company}. 1978.

\bibitem{WW}Wang,. F.-Y., Wang, J., Harnack inequalities for stochastic equations driven by L\'evy noise, \textit{J. Math. Anal. Appl.} {\bf 410} (2014), 513-523.

\bibitem{XZ}  Xie, L., Zhang,  X.,   Ergodicity of stochastic differential equations with jumps and singular coefficients,    \textit{ Ann. Inst. H. Poincar\'e Probab. Statist.}, {\bf 56} (2020), 175-229.

\bibitem{Z} Zhang, X.,   Strong solutions of SDEs with singural drift and Sobolev diffusion coefficients,   {\it Stoch. Process Their Appl.}, {\bf 115} (2005),
1805-1818.

\bibitem{Z1} Zhang,  X., {Stochastic differential equations with Sobolev drifts and driven by $\alpha$-stable processes,} \textit{ Ann. Inst. H. Poincar\'e Probab. Statist. }, {\bf 49} (2013), 1057-1079.

\bibitem{Z2}Zhang,  X.,   Stochastic homeomorphism flows of SDEs with singular drifts and Sobolev diffusion coefficients,  {\it
	Electron. J. Probab.}, {\bf 16} (2011), 1096-1116.

\bibitem{AZ} Zvonkin, A.~K.,   A transformation of the phase space of a diffusion process that removes the drift,   {\it Math. USSR Sb.},  {\bf 93} (1974), 129-149.

}
\end{thebibliography}

\end{document}